\definecolor{yxc}{RGB}{255,0,0}
\definecolor{yc}{RGB}{190,0,255} 
\definecolor{ytw}{RGB}{255,0,127}
\definecolor{dacong}{RGB}{10,103,68}
\newtheorem{theorem}{Theorem}[section]
\newtheorem{lemma}[theorem]{Lemma}
\newtheorem{corollary}[theorem]{Corollary}
\newtheorem{definition}[theorem]{Definition}
\title{ 
Independent Natural Policy Gradient Methods for Potential Games: Finite-time Global Convergence with Entropy Regularization  
}
\author{Shicong Cen{\footnote{The first two authors contributed equally.}
\thanks{S. Cen and Y. Chi are with the Department of Electrical and Computer Engineering, Carnegie Mellon University; emails: {\tt\small \{shicongc,yuejiec\}@andrew.cmu.edu}.}}\\
Carnegie Mellon University \\
\and Fan Chen{\footnotemark[1] \thanks{F. Chen is with the Department of Mathematics at Peking University; email: {\tt\small chern@pku.edu.cn}.}}\\
Peking University \\
\and Yuejie Chi\footnotemark[2]\\
Carnegie Mellon University
}
\date{April 13, 2022; \quad Revised \today}
\newcommand{\ex}[2]{\mathbb{E}_{#1}\left[#2\right]}
\newcommand{\exlim}[2]{\mathop\mathbb{E}\limits_{#1}\left[#2\right]}
\newcommand{\prn}[1]{\left({#1}\right)} 
\newcommand{\prnbig}[1]{\big({#1}\big)} 
\newcommand{\KL}[2]{\mathsf{KL}\prnbig{{#1}\,\|\,{#2}}}
\newcommand{\innprod}[1]{\big\langle{#1} \big\rangle} 
\newcommand{\norm}[1]{\left\|{#1}\right\|} 
\newcommand{\brk}[1]{\left[{#1}\right]} 
\newcommand{\cA}{\mathcal{A}}
\newcommand{\cH}{\mathcal{H}}
\definecolor{dacong}{RGB}{10,103,68}
\begin{document}

\maketitle
\thispagestyle{empty}
\pagestyle{empty}

\begin{abstract}
    A major challenge in multi-agent systems is that the system complexity grows dramatically with the number of agents as well as the size of their action spaces, which is typical in real world scenarios such as autonomous vehicles, robotic teams, network routing, etc. It is hence in imminent need to design decentralized or independent algorithms where the update of each agent is only based on their local observations without the need of introducing complex communication/coordination mechanisms.  

    In this work, we study the finite-time convergence of independent entropy-regularized natural policy gradient (NPG) methods for potential games, where the difference in an agent's utility function due to unilateral deviation matches exactly that of a common potential function. The proposed entropy-regularized NPG method enables each agent to deploy symmetric, decentralized, and multiplicative updates according to its own payoff. We show that the proposed method converges to the quantal response equilibrium (QRE)---the equilibrium to the entropy-regularized game---at a sublinear rate, which is independent of the size of the action space and grows at most sublinearly with the number of agents. Appealingly, the convergence rate further becomes {\em independent} with the number of agents for the important special case of identical-interest games, leading to the first method that converges at a {\em dimension-free} rate. Our approach can be used as a smoothing technique to find an approximate Nash equilibrium (NE) of the unregularized problem without assuming that stationary policies are isolated. 
\end{abstract}

\section{Introduction}



Reinforcement learning (RL) has garnered a growing amount of interest in recent years, due to its excellent empirical performance in a wide variety of applications, such as Go \cite{silver2016mastering}, motor control \cite{levine2016end}, chip design \cite{mirhoseini2021graph}, and so on. There is a growing interest to applying RL techniques such as Q-learning and policy gradient methods to the setting with multiple agents, i.e., multi-agent reinforcement learning (MARL) problems. 

While it seems appealing to apply single-agent RL methods to each agent in a multi-agent system in a straightforward fashion, this approach neglects non-stationarity of the environment due to the presence of other agents, and thus lacks theoretical support in general.
The complication has given rise to the paradigm of \textit{centralized training with decentralized execution} (CTDE) \cite{lowe2017multi}, where the policies are first obtained through training with a centralized controller with access to all agents' observations and then disseminated to each agent for execution. However, this approach falls short of adapting to changes in the environment without retraining and raises privacy concerns as well. 
It is hence of great interest to understand and design more versatile \textit{independent learning} algorithms that only depend on the agents' local observations, require minimal coordination between agents, and provably converge.

In this work, we focus on independent learning algorithms for potential games \cite{monderer1996potential}, an important class of games that admit a potential function to capture the differences in each agent's utility function induced by unilateral deviations. In particular, the analysis established in this work is tailored to potential games in their most basic setting, i.e., static potential games, an important stepping stone to the more general Markov setting. Despite its simple formulation and decades-long research, however, the computational underpinnings of such problems are still far from mature, especially when it comes to finding the Nash equilibrium (NE) of potential games in a decentralized manner. While several recent works have made significant breakthroughs by achieving logarithmic regrets with independent learning dynamics \cite{daskalakis2021nearoptimal,anagnostides2021near}, these results only guarantee convergence to coarse correlated equilibrium or correlated equilibrium, which are arguably much weaker equilibrium concepts than NE and hence do not lead to an approximate NE solution.

\subsection{Our contributions}
\label{sec:contribution}
We seek to find the quantal response equilibrium (QRE) \cite{mckelvey1995quantal}, the prototypical extension of NE for games with bounded rationality \cite{selten1989evolution}, where each agent runs independent natural policy gradient (NPG) methods \cite{kakade2001natural}  involving symmetric, decentralized, and multiplicative updates according to its own payoff. This amounts to solving a potential game with entropy regularization, whose algorithmic role has been studied in the setting of single-agent RL \cite{mei2020global,cen2020fast} as well as two-player zero-sum games \cite{cen2021fast}, but yet to be explored in more general settings.  
Our contributions are summarized below.
\begin{itemize} 
    \item \textbf{Finite-time global convergence of independent entropy-regularized NPG methods.} We show that independent entropy-regularized NPG methods provably converge to the QRE of a potential game, and it takes no more than
    \[
        \mathcal{\mathcal{O}}\prn{\frac{\min \{\sqrt{N}, \Phi_{\max} \}\Phi_{\max}}{\tau^2\epsilon^2}}
    \]
    iterations to find an $\epsilon$-optimal QRE (to be defined precisely). Here, $N$ stands for the number of agents, $\tau > 0$ the entropy regularization parameter, and $\Phi_{\max} > 0$ the maximum value of the potential function. 
    \vspace{1ex}
    \item \textbf{Finite-time global convergence to $\epsilon$-NE without isolation assumption.}
    By setting the entropy regularization parameter $\tau$ sufficiently small, the result translates to finding an approximate NE with non-asymptotic convergence guarantees, thereby obviating the additional assumption in prior literature \cite{fox2021independent, palaiopanos2017multiplicative,zhang2022effect} that requires the set of stationary policies to be isolated. Specifically, it takes no more than
    \[
        \widetilde{\mathcal{O}}\prn{\frac{\min\{\sqrt{N}, \Phi_{\max}\} \Phi_{\max}}{\epsilon^4}}
    \]
    iterations to find an $\epsilon$-NE for the unregularized potential game, where $\widetilde{\mathcal{O}}$ hides logarithmic dependencies.
\end{itemize}
These rates give the first set of iteration complexities---to the best of our knowledge---that are independent of the size of the action spaces, up to logarithmic factors. In addition, the iteration complexities exhibit a sublinear dependency with the number of agents, outperforming existing NE-finding algorithms whose complexities depend at least linearly with the number of agents. Even more appealingly, when interpreting our convergence rates for the important special case of identical-interest games with bounded payoffs \cite{monderer1996fictitious}, they further become independent with the number of agents, leading to the {\em first} method that achieves a {\em dimension-free} convergence rate of $ \widetilde{\mathcal{O}}\prn{1 /\epsilon^4} $ to find an $\epsilon$-NE.

\subsection{Related works}

We review some related works, focusing on the theoretical advances on policy gradient methods and independent learning in games.

\paragraph{Global convergence of policy gradient methods.} Only recently theoretical understandings on the global convergence of policy gradient (PG) methods have emerged, mostly in the single-agent setting, including but not limited to \cite{fazel2018global,agarwal2019optimality,mei2020global,cen2020fast,bhandari2019global,liu2020improved,li2021softmax,mei2020escaping,wang2019neural,xiao2022convergence}. In addition, several works developed finite-time guarantees of independent PG methods for zero-sum two-player Markov games \cite{daskalakis2020independent,wei2021last,zhao2021provably,cen2021fast} in the competitive MARL setting. A recent line work has been successful in extending PG methods with direct parameterization to Markov potential games \cite{zhang2021gradient, leonardos2021global, ding2022independent, mao2022improving}. In addition, \cite{zhang2022effect} studies the finite-time convergence rate of PG methods with softmax parameterization for Markov potential games. Given that NPG methods often have better finite-time convergence rates than vanilla PG methods in the single-agent setting, our work focuses on the understanding of NPG methods for potential games.  

\paragraph{Fast convergence of natural policy gradient methods with entropy regularization.} Entropy regularization as a de facto trick to promote exploration in RL \cite{haarnoja2018soft} and has been shown to provably accelerate convergence of policy gradient methods for single-agent RL \cite{mei2020global, cen2020fast, cen2021fast, zhan2021policy, lan2021policy}. In particular, combining entropy regularization with NPG methods leads to fast linear convergence at a desirable dimension-free rate \cite{cen2020fast,zhan2021policy, lan2021policy}, which continues to hold in two-player zero-sum games \cite{cen2021fast}. Extending such results to potential games, however, is non-trivial, due to the non-uniqueness of NE even with regularization. \cite{fox2021independent, palaiopanos2017multiplicative,zhang2022effect} established asymptotic convergence of independent NPG methods for Markov potential games with an additional assumption that requires the set of stationary policies to be isolated. \cite{heliou2017learning} demonstrated asymptotic convergence of NPG with diminishing step sizes for potential games in the bandit feedback setting. In addition, \cite{zhang2022effect} proposed to use a log-barrier regularization along with NPG to sidestep the isolation assumption and achieved the same iteration complexity as that of PG methods with direct parameterization. In contrast, we consider NPG with entropy regularization, which achieves a convergence rate that has better dependencies with the size of the action spaces and the number of agents.


\paragraph{Independent learning in general-sum games.} Considerable progress has been made towards understanding independent learning dynamics in general-sum games \cite{daskalakis2021nearoptimal,anagnostides2021near} and general-sum Markov games (also known as stochastic games) \cite{song2021can,jin2021v,mao2022provably} by establishing non-asymptotic convergence to correlated equilibrium and coarse correlated equilibrium. However, such successes do not directly extend to potential games where NE is of interest. Specialized analysis for potential games is thus needed as finding approximate NE in a two-player game can be PPAD-hard even with full information \cite{daskalakis2013complexity}. Notably, there have been attempts to establish asymptotic convergence with independent learning dynamics \cite{marden2007regret, marden2009payoff,young2004strategic} for weakly acyclic games \cite{young2020individual}, which includes potential games as a special case.

\subsection{Notation and paper organization}
We use $\Delta(\mathcal{A})$ to denote the probability simplex over the set $\mathcal{A}$. For a vector $\mathbf{a} \in \mathcal{A}^N$, we use $a_i \in \mathcal{A}$ and $a_{-i}\in\mathcal{A}^{N-1}$ to denote the entry with index $i$ and all the rest entries as a vector, respectively. The application of scalar functions such as $\exp$ and $\log$ to vectors are defined in an entry-wise fashion. Let $\mathbf{1}$ be the all-one vector, and $[N]=\{1,\ldots, N\}$. Given two distributions $\pi_1$ and $\pi_1'$ over $\mathcal{A}$, the Kullback-Leibler (KL) divergence from $\pi_1'$ to $\pi_1$ is defined by $\KL{\pi_1}{\pi_1'} = \sum_{a\in \mathcal{A}} \pi_1(a) (\log \pi_1(a) - \log \pi_1'(a))$. Note that KL divergence is additive for product distributions in the sense that $\KL{\pi}{\pi'}=\sum_{i\in[N]} \KL{\pi_i}{\pi_i'}$ for $\pi =  \pi_1 \times \cdots \times \pi_N  \in \Delta (\cA)^N$ and $\pi' =  \pi_1' \times \cdots \times \pi_N' \in \Delta (\cA)^N$. We denote Jeffrey divergence \cite{jeffreys1998theory} by
$J(\pi, \pi') = \KL{\pi}{\pi'} + \KL{\pi'}{\pi}$, which is the symmetric version of the KL divergence.

The rest of this paper is organized as follows. Section \ref{sec:formulation} presents the backgrounds of the potential game setup. Section \ref{sec:NPG_results} introduces independent NPG methods and presents the finite-time global convergence guarantees. Section \ref{sec:pf_sketch} provides an outline to the analysis and the rest of the proofs are deferred to the appendix. Section \ref{sec:numerical} presents numerical results to verify the theoretical findings. Finally, we conclude in Section \ref{sec:conclusion}.

\section{Potential Games with Entropy Regularization}
\label{sec:formulation} 
In this section, we introduce the basics of potential games, as well as the incorporation of entropy regularization into its formulation.
 
    \subsection{Potential games}
    A strategic game $\mathcal{G} = \{N, \cA, \{u_i\}_{i\in [N]}\}$ consists of $N$ agents each with an individual utility or payoff function 
    $$u_i: \cA^N \to [0, 1], \qquad i \in [N],$$ 
    where $\cA$ is, without loss of generality, a finite action space shared by all agents. The policy or mixed strategy
  of agent $i$ is denoted by  $\pi_i \in \Delta(\cA)$, which is a distribution over the action space $\cA$.
    By an abuse of notation, let $u_i(\pi)$ denote agent $i$'s expected utility function under the joint policy $\pi = \pi_1 \times \cdots \times \pi_N \in \Delta (\cA)^N$, i.e., 
    \begin{equation*}
        u_i(\pi) = \ex{a_i \sim \pi_i, \forall i \in [N]}{u_i(\bm{a})},
    \end{equation*}
    where we denote the action profile $(a_1, \cdots, a_N)$ of all agents by $\bm{a} \in \cA^N$. We shall often instead write $\bm{a} = (a_i, a_{-i})$ where $a_{-i} = \{a_j\}_{j\neq i}$ collects the actions of all agents but $i$; similarly, we write $\pi= (\pi_i, \pi_{-i})$, where $\pi_{-i} = \{\pi_j\}_{j\neq i}$ collects the policies of all agents but $i$.

  The game $\mathcal{G}$ is said to be a potential game if there exists a potential function $\Phi: \cA^N \to \mathbb{R}$ such that
    \[
        u_i(a_i,\, a_{-i}) - u_i(a_i',\, a_{-i}) = \Phi(a_i,\, a_{-i}) - \Phi(a_i',\, a_{-i})
    \]
    for any $a_i, a_i' \in \cA$, $a_{-i}\in \cA^{N-1}$ and $i \in[N]$.
    We assume that 
    \begin{equation}\label{eq:bound_Phi}
    0 \le \Phi(\bm{a}) \le \Phi_{\max}, \qquad \forall \bm{a} \in \cA^N,
    \end{equation} 
    where $\Phi_{\max}$ upper bounds the potential function. 
An important special case of the potential game is when all the agents share the same utility function, known as the identical-interest game \cite{monderer1996fictitious}. It is straightforward to see that for an identical-interest game, we can set $\Phi = u_i$ for all $i \in [N]$, and therefore $\Phi_{\max}=1$ due to the fact that the individual payoff is bounded in $[0,1]$.

    By linearity of expectation, we have
    \[
        u_i(\pi_i,\, \pi_{-i}) - u_i(\pi_i',\, \pi_{-i}) = \Phi(\pi_i,\, \pi_{-i}) - \Phi(\pi_i',\, \pi_{-i}),
    \]
    where, again with slight abuse of notation, we denote
    $$\Phi(\pi) = \ex{\bm{a}\sim \pi }{\Phi(\bm{a})} =  \ex{a_i\sim \pi_i, \forall i \in [N]}{\Phi(\bm{a})},$$ for any $\pi_i, \pi_i' \in \Delta(\cA)$, $\pi_{-i}\in \Delta(\cA)^{N-1}$ and $i \in [N]$.

\paragraph{Nash equilibrium.} We now introduce the important notion of \textit{Nash equilibrium} in a potential game.
\begin{definition}[Nash equilibrium]
    A joint policy $\pi^\star$ is called a \textit{Nash equilibrium} (NE) when it holds that
    \[
        u_i(\pi_i,\, \pi_{-i}) \ge u_i(\pi_i',\, \pi_{-i}),\quad \forall {\pi_i' \in \Delta(\cA)}, \; \forall i \in [N].
    \]
 \end{definition}
    In other words, every agent cannot improve its utility function by deviating from the current policy. It is known that there exists at least one NE in a strategic game with finite agents and actions \cite{nash1951non}. It follows immediately that the policy or strategy profile maximizing $\Phi$ in a potential game is an NE.

\paragraph{Marginalized utility.}	Before continuing, let us introduce an important quantity  called the marginalized utility    $r_i^{\pi}: \cA \to \mathbb{R}$:
    \begin{equation}\label{eq:marginalized_utility}
        r_i^{\pi}(a) = \ex{a_{-i} \sim \pi_{-i}}{u_i(a,\, a_{-i})},
    \end{equation}
    which can be viewed as the ``single-agent'' payoff or reward function when the policies of other agents are fixed. It is immediate to see that the utility function $u_i$ can be written as
    \begin{align*}
        u_i(\pi) &= \ex{\bm{a} \sim \pi}{u_i(\bm{a})}= \ex{a\sim\pi_i}{r_i^{\pi}(a)} = \langle r_i^{\pi}, \pi_i \rangle.
    \end{align*}
Here and throughout this paper, we shall often abuse the notation to treat $\pi$, $\pi_i$ and $r_i^{(t)}$ as vectors.
    
    \subsection{Entropy-regularized potential games}

    The \textit{quantal response equilibrium} (QRE) is proposed by McKelvey and Palfrey \cite{mckelvey1995quantal} as a seminal extension to the Nash equilibrium, which enables players to combat randomness in payoffs. A QRE or logit equilibrium $\pi_\tau^\star = \pi_{\tau, 1}^\star\times \cdots \times \pi_{\tau, N}^\star$ necessitates every agent to maximize its own utility function with entropy regularization \cite{mertikopoulos2016learning}, i.e.,
    \[
        u_{i,\tau}(\pi_{\tau,i}^\star,\, \pi_{\tau,-i}^\star) \ge u_{i,\tau}(\pi_i',\, \pi_{\tau,-i}^\star),\quad \forall{\pi_i' \in \Delta(\cA)},
    \]
    where the entropy-regularized individual utility function is given by 
    \[
        u_{i,\tau}(\pi) = u_i(\pi) + \tau \cH(\pi_i).
    \]
    Here, $\pi = \pi_1 \times \cdots \times \pi_N$, $\tau > 0$ is the regularization parameter, and $\cH(\pi_i) = -\sum_{a\in\cA} \pi_i(a|s) \log \pi_i(a|s)$ is the Shannon entropy of the policy  $\pi$ employed by agent $i$.     By introducing the regularized potential function
    \[
        \Phi_{\tau}(\pi) =  \Phi(\pi) + \tau \cH(\pi) := \Phi(\pi) + \tau \sum_{i\in [N]} \cH(\pi_i),
    \]
    it is easy to verify
    \[
        u_{i,\tau}(\pi_i,\, \pi_{-i}) - u_{i,\tau}(\pi_i',\, \pi_{-i}) = \Phi_{\tau}(\pi_i,\, \pi_{-i}) - \Phi_{\tau}(\pi_i', \,\pi_{-i}).
    \]
    for any $\pi_i, \pi_i' \in \Delta(\cA), \pi_{-i}\in \Delta(\cA)^{N-1}$ and $i \in [N]$, as long as the unregularized game is a potential game.
    
\paragraph{Fixed-point characterization of QRE.}  An equivalent interpretation of QRE is to let each agent assign the probability mass in its policy according to every action's utility in a bounded rationality fashion \cite{selten1989evolution}:
    \begin{equation}\label{eq:QRE}
        \pi_{\tau, i}^\star(a) \propto \exp\left(r_i^{\pi_\tau^\star}(a)/\tau \right), \quad \forall i \in [N],
    \end{equation}
where $r_i^{\pi_\tau^\star}$ is the marginalized utility of $\pi_\tau^\star$ defined in \eqref{eq:marginalized_utility}. Note that the above relation defines a fixed-point equation of $\pi_\tau^\star$.

\section{Finite-Time Global Convergence of Independent Natural Policy Gradient Methods}

\label{sec:NPG_results}

A popular approach in the game theory literature to find an NE of a potential game is for each agent to switch to the best or better response policy, one at a time, and is generally referred to as \textit{best-response dynamics}. This approach converges to an NE in finite iterations \cite{monderer1996potential} and underlies the algorithm design of a considerable number of works on, e.g., cut games \cite{christodoulou2006convergence}, congestion games \cite{chien2011convergence}, weakly acyclic games \cite{young2004strategic}, and, more recently, their extensions in the Markovian setting \cite{song2021can,arslan2016decentralized}. It is noted, however, that this approach isolates itself from the independent learning paradigm as the update sequence needs to be scheduled in a centralized manner that is not often possible.
Therefore, it is greatly desirable to design independent update rules, where each agent updates simultaneously without observing the payoffs of other agents, that achieves faster convergence. In this section, we answer this call by developing the independent natural policy gradient method to solve (entropy-regularized) potential games with finite-time global convergence guarantees.

\subsection{Independent natural policy gradient method}

In policy optimization, it is common practice to parameterize the policy class in a way that obviates the need for tackling probability simplex constraint. We consider the standard softmax parameterization, where every agent $i$ generates its own policy $\pi_{\theta_i}$ parameterized with $\theta_i \in \mathbb{R}^{|\cA|}$ through the softmax transform:
\[
    \pi_{\theta_i} (a) = \frac{\exp(\theta_i(a))}{\sum_{a\in\cA}\exp(\theta_i(a))}.
\]

Every agent $i$ evaluates and updates its policy independently using the \textit{natural policy gradient} (NPG) method \cite{kakade2001natural}:
\begin{equation}        \label{eq:def_NPG}
    \theta_i \leftarrow \theta_i + \eta (\mathcal{F}^{\theta_i})^\dagger \nabla_{\theta_i} u_{i,\tau}(\pi),
\end{equation}
where $(\mathcal{F}^{\theta_i})^\dagger$ denotes the Moore-Penrose pseudo-inverse of the Fisher information matrix $\mathcal{F}^{\theta_i}$, which is defined as 
\[
    \mathcal{F}^{\theta_i} = \ex{a \sim \pi_{\theta_i}(\cdot)}{(\nabla_{\theta_i} \log\pi_{\theta_i}(a))(\nabla_{\theta_i} \log\pi_{\theta_i}(a))^\top},
\]
and $\eta > 0$ is the learning rate. Moreover, the gradient $\nabla_{\theta_i} u_{i,\tau}(\pi)$ can be expressed as
\[
    \nabla_{\theta_i} u_{i,\tau}(\pi) = r_i^{\pi} - \tau \log\pi_i - \tau \mathbf{1}.
\]
It turns out that with some algebra, the NPG update rule \eqref{eq:def_NPG} can be equivalently rewritten with respect to the policies in use \cite{cen2020fast}:
\begin{equation}
    \label{eq:NPG_update}
    \pi_i^{(t+1)}(a)\propto \pi_i^{(t)}(a)^{1-\eta\tau} \exp(\eta r^{(t)}_i(a)),
\end{equation}
where $\pi_i^{(t)}$ denotes agent $i$'s policy in the $t$-th iteration, and $r_i^{(t)}: = r_i^{\pi^{(t)}}$ denotes the marginalized utility of $\pi^{(t)}$ (cf.~\eqref{eq:marginalized_utility}). The complete procedure is summarized in Algorithm~\ref{alg:INPG}.

\begin{algorithm}[ht]
\label{alg:INPG}
\caption{Independent NPG for Entropy-regularized Potential Games}
\textbf{Input:} Regularization parameter $\tau > 0$, step size for policy update $\eta > 0$.\\
\textbf{Initialization:} Set $\pi_i^{(0)}$ as uniform policy for all $i \in [N]$.
\SetKwProg{ForP}{for}{ do in parallel}{end}

\For{$t = 0,1,\cdots$}{
\ForP{all agent $i\in [N]$}{
Observe agent $i$'s marginalized utility $r_i^{(t)}$.

Perform policy update
\begin{equation*}
\pi_i^{(t+1)}(a)\propto \pi_i^{(t)}(a)^{1-\eta\tau} \exp(\eta r^{(t)}_i(a)).
\end{equation*}
}
}

\end{algorithm}

To better understand the update rule \eqref{eq:NPG_update} as well as prepare for follow-up analysis, we introduce $\pi_i^{\star(t)}$ to denote agent $i$'s best-response policy in the $t$-th iteration, which is the policy that obeys
\begin{equation}\label{eq:def_br}
    u_{i, \tau}(\pi_i^{\star(t)},\, \pi_{-i}^{(t)}) = \max_{\pi_i'}u_{i, \tau}(\pi_i',\, \pi_{-i}^{(t)}).
\end{equation}
It is easily seen that
\begin{equation}\label{eq:best_response}
    \pi_i^{\star(t)}(a) \propto \exp(r_{i}^{(t)}(a)/\tau).
\end{equation}
Therefore, the updated policy in \eqref{eq:NPG_update} can be regarded as a multiplicative combination of the current policy $\pi_i^{(t)}$ and the best-response policy $\pi_i^{\star(t)}$, where the weight is controlled by the learning rate $\eta$. 
Note that the unregularized counterpart of the method is equivalent to Multiplicative Weights Update method (MWU) \cite{littlestone1994weighted,arora2012multiplicative} or Hedge \cite{freund1999adaptive}.

\subsection{Finite-time global convergence}  
We are now ready to present our main theorem concerning the finite-time global convergence of independent NPG for solving entropy-regularized potential games. We introduce
    \begin{align*}
        \texttt{NE-gap}(\pi) &= \max_{i \in [N], \pi_i'\in \Delta(\cA)} \brk{u_i(\pi_i', \pi_{-i}) - u_i(\pi_i, \pi_{-i})}
    \end{align*}
and
\begin{align*}  
        \texttt{QRE-gap}_\tau(\pi) &= \max_{i \in [N], \pi_i'\in \Delta(\cA)} \brk{u_{i,\tau}(\pi_i', \pi_{-i}) - u_{i,\tau}(\pi_i, \pi_{-i})}
    \end{align*} 
to characterize how close the joint policy $\pi$ is to an equilibrium.
A joint policy $\pi$ is said to be an $\epsilon$-QRE (resp. $\epsilon$-NE) when $\texttt{QRE-gap}(\pi) \le \epsilon$ (resp. $\texttt{NE-gap}(\pi) \le \epsilon$). For notational simplicity, we denote 
$$\Phi_\tau^{(t)}: = \Phi_\tau(\pi^{(t)}), \qquad \texttt{QRE-gap}_{\tau}^{(t)}:=\texttt{QRE-gap}_{\tau}(\pi^{(t)}),\quad \mbox{and} \quad \texttt{NE-gap}^{(t)}:=\texttt{NE-gap}(\pi^{(t)}).$$ 

Our main theorem is as follows, whose proof is deferred to Section~\ref{sec:pf_sketch}.
\begin{theorem}    \label{thm:QRE_convergence}
    Suppose that the learning rate $\eta$ satisfies $\eta \le \frac{1}{2(\min\{\sqrt{N}, 2\Phi_{\rm max}\}+\tau)}$, then for independent NPG updates \eqref{eq:NPG_update}, it holds that
    \begin{equation*}
        \frac{1}{T}\sum_{t=1}^{T} {\texttt{QRE-gap}_{\tau}^{(t)}} \le \frac{2}{\eta\tau T} \Big(\tau\norm{\log \pi^{(0)} - \log \pi^{\star(0)}}_\infty + \sqrt{2 \eta T(\Phi_\tau^{(T)} - \Phi_\tau^{(0)})}\Big).
    \end{equation*}
\end{theorem}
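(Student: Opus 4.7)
My plan combines a monotone ascent lemma for the regularized potential $\Phi_\tau$ with a contraction-plus-drift analysis of the distance between each agent's iterate $\pi_i^{(t)}$ and its entropy-regularized best response $\pi_i^{\star(t)}$.

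\emph{Ascent lemma.} Since NPG is mirror ascent on $u_{i,\tau}(\cdot,\pi_{-i}^{(t)})$ with the KL Bregman divergence, the three-point identity together with $\nabla_{\pi_i} u_{i,\tau}(\pi) = r_i^\pi - \tau\log\pi_i - \tau\mathbf{1}$ yields the exact single-agent improvement
\[
    u_{i,\tau}(\pi_i^{(t+1)},\pi_{-i}^{(t)}) - u_{i,\tau}(\pi^{(t)}) = \tfrac{1}{\eta}\bigl[(1-\eta\tau)\KL{\pi_i^{(t+1)}}{\pi_i^{(t)}} + \KL{\pi_i^{(t)}}{\pi_i^{(t+1)}}\bigr].
\]
By the potential property and multilinearity of $\Phi$, the overall change $\Phi_\tau^{(t+1)} - \Phi_\tau^{(t)}$ equals the sum of these single-agent improvements plus a cross term from the simultaneous updates of all agents. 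A careful estimate of this cross term---using the bounded range $|\Phi|\le\Phi_{\max}$ together with Pinsker's inequality---yields an effective smoothness coefficient of order $\min\{\sqrt{N},\Phi_{\max}\}$ matching the step-size assumption. Under the stated condition on $\eta$, the cross term is absorbed by half the single-agent sum, giving $\Phi_\tau^{(t+1)} - \Phi_\tau^{(t)} \ge \tfrac{1}{2\eta}\sum_i\KL{\pi_i^{(t+1)}}{\pi_i^{(t)}}$; telescoping then yields $\sum_{t,i}\KL{\pi_i^{(t+1)}}{\pi_i^{(t)}} \le 2\eta(\Phi_\tau^{(T)} - \Phi_\tau^{(0)})$.

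\emph{QRE-gap as KL to a moving target, and a one-step recursion.} The closed form $\pi_i^{\star(t)} \propto \exp(r_i^{(t)}/\tau)$ together with direct algebra gives $\texttt{QRE-gap}_\tau^{(t)} = \tau \max_i \KL{\pi_i^{(t)}}{\pi_i^{\star(t)}} \le \tau \max_i \|\log\pi_i^{(t)} - \log\pi_i^{\star(t)}\|_\infty$. Rewriting the NPG update as $\log\pi_i^{(t+1)} - \log\pi_i^{\star(t)} = (1-\eta\tau)(\log\pi_i^{(t)} - \log\pi_i^{\star(t)}) + c_i^{(t)}\mathbf{1}$ reveals contraction by $(1-\eta\tau)$ toward the \emph{instantaneous} target. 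The target itself drifts at rate $\tfrac{1}{\tau}\|r_i^{(t+1)} - r_i^{(t)}\|_\infty$, and since $u_i\in[0,1]$ with $\pi_{-i}$ a product distribution, Pinsker on the joint gives $\|r_i^{(t+1)} - r_i^{(t)}\|_\infty \le \sqrt{\tfrac{1}{2}\sum_{j\neq i}\KL{\pi_j^{(t+1)}}{\pi_j^{(t)}}}$---a square root of a KL sum, crucially sharper than a naive TV bound and key to avoiding a spurious $\sqrt{N}$. Setting $L^{(t)} := \max_i \|\log\pi_i^{(t)} - \log\pi_i^{\star(t)}\|_\infty$, contraction plus drift combine into
\[
    L^{(t+1)} \le (1-\eta\tau)\, L^{(t)} + \tfrac{1}{\tau}\sqrt{\tfrac{1}{2}\sum_j \KL{\pi_j^{(t+1)}}{\pi_j^{(t)}}}.
\]

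\emph{Aggregation and main obstacle.} Unrolling the recursion and summing the resulting geometric series gives
\[
    \sum_{t=1}^T L^{(t)} \le \tfrac{L^{(0)}}{\eta\tau} + \tfrac{1}{\eta\tau^2}\sum_{s=0}^{T-1}\sqrt{\tfrac{1}{2}\sum_j\KL{\pi_j^{(s+1)}}{\pi_j^{(s)}}}.
\]
Applying Cauchy--Schwarz in $s$ and invoking the ascent bound yields $\sum_s \sqrt{\sum_j \KL{\pi_j^{(s+1)}}{\pi_j^{(s)}}} \le \sqrt{2\eta T(\Phi_\tau^{(T)} - \Phi_\tau^{(0)})}$. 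Since $\texttt{QRE-gap}_\tau^{(t)}\le\tau L^{(t)}$, multiplying by $\tau$, dividing by $T$, and identifying $L^{(0)} = \|\log\pi^{(0)} - \log\pi^{\star(0)}\|_\infty$ produce precisely the two-term bound in the theorem. I expect the cross-term estimate in the ascent lemma to be the main hurdle: a naive multilinear smoothness bound costs a factor of $N\Phi_{\max}$, whereas the theorem's step-size condition demands the refined $\min\{\sqrt{N},\Phi_{\max}\}$ factor, which requires exploiting the bounded range of $\Phi$ in a non-trivial way alongside Pinsker.
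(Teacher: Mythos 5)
Your architecture matches the paper's almost step for step: a monotone-ascent bound on $\Phi_\tau$ that telescopes into $\sum_t J(\pi^{(t+1)},\pi^{(t)})\le 2\eta(\Phi_\tau^{(T)}-\Phi_\tau^{(0)})$, a $(1-\eta\tau)$-contraction of each iterate toward its moving best response with drift controlled by $\normbig{r_i^{(t+1)}-r_i^{(t)}}_\infty\lesssim\sqrt{\mathrm{KL}}$ (the paper's Lemma~\ref{lem:r_lip}), and a Cauchy--Schwarz aggregation of the drift terms. The identity $u_{i,\tau}(\pi_i^{\star(t)},\pi_{-i}^{(t)})-u_{i,\tau}(\pi^{(t)})=\tau\KL{\pi_i^{(t)}}{\pi_i^{\star(t)}}$ and the final bookkeeping are all correct. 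However, there is one genuine gap: the ascent lemma's cross-term estimate is asserted, not proved, and it is the crux of the entire argument (the paper's Lemma~\ref{lem:bandit_perf_impv}, which takes a full appendix section). You correctly diagnose that a naive multilinear smoothness bound gives $N\Phi_{\max}$ and that the required constant is $\min\{\sqrt N,2\Phi_{\max}\}$, but "bounded range plus Pinsker" is not a proof, and in fact the two halves of the minimum need genuinely different arguments: the $\sqrt N$ bound comes from a hybrid decomposition in which agent $i$'s improvement is measured against the interpolated profile $\widetilde\pi_{-i}^{(t)}$ (agents $j<i$ not yet updated, agents $k>i$ already updated), followed by Young's inequality with balancing parameter $\sqrt N$ between $d_{TV}(\widetilde\pi_{-i}^{(t)},\pi_{-i}^{(t)})^2$ and $\normbig{\pi_i^{(t+1)}-\pi_i^{(t)}}_1^2$; the $2\Phi_{\max}$ bound instead expands $\pi^{(t+1)}(\bm a)/\pi^{(t)}(\bm a)-1-\sum_i\prn{\pi_i^{(t+1)}(a_i)/\pi_i^{(t)}(a_i)-1}$ and controls it via the elementary inequality $0\le x-\log(1+x)\le x\log(1+x)$, which converts the residual into Jeffrey divergences. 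Without one of these, the step-size condition in the theorem is unjustified and your telescoped KL bound does not follow.

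A secondary, fixable flaw: the recursion $L^{(t+1)}\le(1-\eta\tau)L^{(t)}+\tfrac1\tau\normbig{r_i^{(t+1)}-r_i^{(t)}}_\infty$ with $L^{(t)}=\max_i\normbig{\log\pi_i^{(t)}-\log\pi_i^{\star(t)}}_\infty$ does not follow from $\log\pi_i^{(t+1)}-\log\pi_i^{\star(t)}=(1-\eta\tau)(\log\pi_i^{(t)}-\log\pi_i^{\star(t)})+c_i^{(t)}\mathbf 1$, because $\norm{v+c\mathbf 1}_\infty$ is not bounded by $\norm{v}_\infty$. You must run the recursion on a normalization-invariant quantity; the paper does this via the unnormalized auxiliary sequence $\xi_i^{(t)}$, paying a factor of $2$ through Lemma~\ref{lem:log_pi_gap} when converting back to normalized log-policies (equivalently, one can use the span seminorm). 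That factor of $2$ is precisely the leading $2/(\eta\tau T)$ in the theorem, so your final bound survives, but the recursion as written is not valid.
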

\smallskip
Theorem \ref{thm:QRE_convergence} suggests that the average iterate of independent NPG converges to an $\epsilon$-QRE at a sublinear rate when we initialize it via uniform policies, as indicated in the following corollary. The proof can be found in Appendix~\ref{proof:rate_QRE}.

\begin{corollary} \label{corollary:rate_QRE}
    Assume the independent NPG method is initialized with uniform policies at all agents. Setting the learning rate $\eta = 1/(2(\min\{\sqrt{N}, 2\Phi_{\rm max}\}+\tau))$ and $\tau = \mathcal{O}(1)$, then independent NPG updates ensure that 
    $
        \frac{1}{T}\sum_{t=1}^{T} \texttt{QRE-gap}_{\tau}^{(t)} \le \epsilon
    $
    with at most
    \[
        T = \mathcal{\mathcal{O}}\prn{\frac{\min\{\sqrt{N}, \Phi_{\rm max}\}\Phi_{\max}}{\tau^2\epsilon^2}}
    \]
    iterations. 
\end{corollary}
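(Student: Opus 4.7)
The plan is to combine a contraction-style tracking bound for the per-agent deviation $\log\pi_i^{(t)} - \log\pi_i^{\star(t)}$ with an approximate ascent lemma for the regularized potential $\Phi_\tau$, then stitch the two together via Cauchy--Schwarz. First, using the closed-form best response \eqref{eq:best_response}, a direct calculation yields the identity
\[
u_{i,\tau}(\pi_i^{\star(t)},\pi_{-i}^{(t)}) - u_{i,\tau}(\pi_i^{(t)},\pi_{-i}^{(t)}) = \tau\,\KL{\pi_i^{(t)}}{\pi_i^{\star(t)}},
\]
so $\texttt{QRE-gap}_\tau^{(t)}=\tau\max_i\KL{\pi_i^{(t)}}{\pi_i^{\star(t)}}$, which is bounded by $\tau$ times the $\ell_\infty$-norm of the centered log-ratio since each $\pi_i^{(t)}$ is a probability vector.

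Next, I would bound the centered log-ratio via a contraction-plus-drift recursion. Rewriting \eqref{eq:NPG_update} and using $\pi_i^{\star(t)}\propto\exp(r_i^{(t)}/\tau)$ gives $\log\pi_i^{(t+1)} = (1-\eta\tau)\log\pi_i^{(t)} + \eta\tau\log\pi_i^{\star(t)} + c_i^{(t)}\mathbf{1}$; subtracting $\log\pi_i^{\star(t+1)}$ on both sides and using the identity $\log\pi_i^{\star(t+1)}-\log\pi_i^{\star(t)} = (r_i^{(t+1)}-r_i^{(t)})/\tau + c\,\mathbf{1}$ produces, writing $\ell_i^{(t)}$ for the centered $\ell_\infty$-norm of $\log\pi_i^{(t)} - \log\pi_i^{\star(t)}$,
\[
\ell_i^{(t+1)}\;\le\;(1-\eta\tau)\,\ell_i^{(t)} + \tfrac{1}{\tau}\bigl\|r_i^{(t+1)}-r_i^{(t)}\bigr\|_\infty.
\]
Since $r_i$ is multilinear in $\pi_{-i}$ with $\|u_i\|_\infty\le 1$, the drift is bounded (by telescoping the product) by $\sum_{j\ne i}\|\pi_j^{(t+1)}-\pi_j^{(t)}\|_1$, and unrolling the geometric recursion then yields
\[
\sum_{t=1}^T\ell_i^{(t)}\;\le\;\tfrac{\ell_i^{(0)}}{\eta\tau} + \tfrac{1}{\eta\tau^2}\sum_{t=0}^{T-1}\bigl\|\pi^{(t+1)}-\pi^{(t)}\bigr\|_1.
\]

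The main obstacle is then to control the total movement $\sum_t\|\pi^{(t+1)}-\pi^{(t)}\|_1$ via an ascent lemma for $\Phi_\tau$. The variational form of the NPG step (as maximizer of $\eta\langle\nabla_{\pi_i}u_{i,\tau}(\pi^{(t)}),\cdot\rangle - \KL{\cdot}{\pi_i^{(t)}}$) combined with strong concavity of $\tau\cH$ yields a single-coordinate ascent
\[
\Phi_\tau(\pi_i^{(t+1)},\pi_{-i}^{(t)}) - \Phi_\tau(\pi_i^{(t)},\pi_{-i}^{(t)}) \;\ge\; \bigl(\tfrac{1}{\eta}-\tau\bigr)\,\KL{\pi_i^{(t+1)}}{\pi_i^{(t)}}.
\]
Expanding $\Phi_\tau^{(t+1)} - \Phi_\tau^{(t)}$ as a telescoping sum over agents turns this into a joint ascent corrupted by cross-agent interaction terms that are second-order in $\pi^{(t+1)}-\pi^{(t)}$. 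The key technical point is to produce two complementary bounds on these cross-terms, namely $\sqrt N\,\|\pi^{(t+1)}-\pi^{(t)}\|_2^2$ from aggregating pairwise bilinear interactions via Cauchy--Schwarz, and $\Phi_{\max}\,\|\pi^{(t+1)}-\pi^{(t)}\|_2^2$ from the uniform bound $0\le\Phi\le\Phi_{\max}$; the step-size condition $\eta\le 1/(2(\min\{\sqrt N,2\Phi_{\max}\}+\tau))$ is precisely what is needed to absorb the smaller one, producing $\sum_t\|\pi^{(t+1)}-\pi^{(t)}\|_2^2\lesssim \eta(\Phi_\tau^{(T)}-\Phi_\tau^{(0)})$.

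Finally, Cauchy--Schwarz (together with Pinsker to pass from $\ell_2$ to $\ell_1$ on each agent's simplex) gives $\sum_t\|\pi^{(t+1)}-\pi^{(t)}\|_1\lesssim\sqrt{\eta T(\Phi_\tau^{(T)}-\Phi_\tau^{(0)})}$; plugging into the recursion from the second paragraph, multiplying by $\tau$, taking the max over $i$, and dividing by $T$ produces the claimed inequality. I expect the hardest part to be the cross-term analysis in the ascent lemma: it is exactly this $\min\{\sqrt N,\Phi_{\max}\}$ bound, as opposed to a coarse $N\Phi_{\max}$ one, that makes the overall iteration complexity only sublinear in the number of agents.
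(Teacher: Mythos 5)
Your plan is correct and follows essentially the same route as the paper: the identity $\texttt{QRE-gap}_\tau^{(t)} = \tau\max_i\KL{\pi_i^{(t)}}{\pi_i^{\star(t)}}$, a $(1-\eta\tau)$-contraction on the (centered) log-ratio with drift controlled by the movement of $r_i^{(t)}$, the two complementary $\sqrt N$ and $2\Phi_{\max}$ bounds on the cross-agent terms in the ascent lemma for $\Phi_\tau$, and a Cauchy--Schwarz/Pinsker assembly; your centered log-ratio plays exactly the role of the paper's auxiliary sequence $\xi_i^{(t)}$, and your $\ell_1$-movement drift bound is the paper's Lemma \ref{lem:r_lip} in disguise. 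The only step left implicit that you must actually use is the uniform initialization, both to bound the initial log-ratio by $2/\tau$ and, more importantly, to get $\Phi_\tau^{(T)}-\Phi_\tau^{(0)}\le\Phi_{\max}$ (otherwise the entropy difference could contribute an extra $\tau N\log|\cA|$ to the bound).
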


\paragraph{Finding approximate NEs.} It is possible to leverage the entropy-regularized potential game to find an approximate NE by setting the regularization parameter sufficiently small. Note that
\begin{align*} 
        \texttt{NE-gap}(\pi) &= \max_{i \in [N], \pi_i'\in \Delta(\cA)} \brk{u_i(\pi_i', \pi_{-i}) - u_i(\pi_i, \pi_{-i})}\\
        &\le \max_{i \in [N], \pi_i'\in \Delta(\cA)} \brk{u_{i,\tau}(\pi_i', \pi_{-i}) - u_{i,\tau}(\pi_i, \pi_{-i})} + \max_{i \in [N], \pi_i'\in \Delta(\cA)}\brk{-\tau \cH(\pi_i') + \tau \cH(\pi_i)}\\
        &\le \texttt{QRE-gap}_\tau(\pi) + \tau \log |\cA|.
\end{align*}
Therefore, by setting the entropy regularization at
$$ \tau = \frac{\epsilon}{2\log|\mathcal{A}|} ,$$
with at most
\[
    T = \widetilde{\mathcal{O}}\prn{\frac{\min\{\sqrt{N}, \Phi_{\max}\} \Phi_{\max}}{\epsilon^4}}
\]
iterations, we can ensure 
$\frac{1}{T}\sum_{t=1}^{T} \texttt{NE-gap}^{(t)} \le \epsilon
$.
 
 \paragraph{Comparisons with prior art.}
Importantly, our iteration complexities do not depend on the size of the action space (up to logarithmic factors), which is in sharp contrast to existing analyses of potential games using other policy gradient approaches, such as direct PG \cite{zhang2021gradient, leonardos2021global, ding2022independent, mao2022improving} and NPG with log-barrier regularization \cite{zhang2022effect}, where the iteration complexity scales as  $\widetilde{\mathcal{O}}\prn{{N |\cA| \Phi_{\max}}/{\epsilon^2}}$ to find an $\epsilon$-approximate NE. In comparison, while our rate $\widetilde{\mathcal{O}}\prn{{\min\{\sqrt{N}, \Phi_{\max}\} \Phi_{\max}}/{\epsilon^4}}$ is worse in terms of $\epsilon$, it is almost independent of the size $|\cA|$ of the action space, as well as exhibits only a sublinear dependency with the number of agents $N$, thus can be beneficial for problems with large action spaces and a large number of agents. Furthermore, for the special case of identical-interest games \cite{monderer1996fictitious} where $\Phi_{\max} = 1$, the convergence rate of our method simplifies to
$$  \widetilde{\mathcal{O}}\prn{\frac{1}{\epsilon^4}} ,$$
 which leads to the first method that achieves a dimension-free iteration complexity (up to a logarithmic factor) for finding an $\epsilon$-NE without imposing any isolation assumptions.


\section{Proof of Theorem~\ref{thm:QRE_convergence}}
\label{sec:pf_sketch}

Before proceeding to the main proof, we first record two useful lemmas. The following elementary lemma is standard (see e.g., \cite[Lemma 3]{cen2021fast} and \cite{cen2020fast}) and will be helpful in the analysis. 
\begin{lemma}
    \label{lem:log_pi_gap}
    For any $\mu_1, \mu_2 \in \Delta(\mathcal{A})$ satisfying 
    $$\mu_1(a) \propto \exp(x_1(a))\quad \mbox{and} \quad
        \mu_2(a) \propto \exp(x_2(a))    $$
    for some $x_1, x_2 \in \mathbb{R}^{|\mathcal{A}|}$,
    we have
    \begin{equation}
        \norm{\log \mu_1 - \log \mu_2}_\infty \le 2 \norm{x_1 - x_2}_\infty.
    \end{equation}
\end{lemma}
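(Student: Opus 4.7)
The plan is to write $\log\mu_j(a) = x_j(a) - \log Z_j$ for $j=1,2$, where $Z_j = \sum_{a\in\mathcal{A}} \exp(x_j(a))$ is the normalizing partition function, and then to control the two terms in the difference separately by the triangle inequality. Concretely, I would first note
\[
    \log\mu_1(a) - \log\mu_2(a) = \bigl(x_1(a) - x_2(a)\bigr) - \bigl(\log Z_1 - \log Z_2\bigr),
\]
and take $\sup_{a\in\mathcal{A}}$ of both sides to obtain
\[
    \norm{\log\mu_1 - \log\mu_2}_\infty \le \norm{x_1 - x_2}_\infty + \bigl|\log Z_1 - \log Z_2\bigr|.
\]

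The remaining step is to bound the log-partition difference by $\norm{x_1 - x_2}_\infty$. Set $\delta := \norm{x_1 - x_2}_\infty$, so that $x_2(a) \le x_1(a) + \delta$ and $x_2(a) \ge x_1(a) - \delta$ pointwise. Exponentiating and summing over $a\in\mathcal{A}$ yields
\[
    e^{-\delta}\, Z_1 \le Z_2 \le e^{\delta}\, Z_1,
\]
and taking logarithms gives $|\log Z_1 - \log Z_2|\le \delta = \norm{x_1 - x_2}_\infty$. Plugging this into the earlier inequality yields the desired bound $\norm{\log\mu_1 - \log\mu_2}_\infty \le 2\norm{x_1 - x_2}_\infty$.

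There is no real obstacle here; the statement is essentially the 1-Lipschitz property of the log-sum-exp function in the sup-norm, combined with a triangle inequality. The only thing to be slightly careful about is that the factor of 2 cannot be improved by this proof (and is in fact tight, as seen by taking $x_1 \equiv 0$ and $x_2 = \delta\, e_a$ for a single coordinate $a$), so I would not attempt to sharpen the constant.
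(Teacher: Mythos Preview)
Your argument is correct and is exactly the standard proof: write $\log\mu_j = x_j - \log Z_j$, apply the triangle inequality, and use the $1$-Lipschitz property of log-sum-exp in $\ell_\infty$. The paper itself does not prove this lemma; it simply cites \cite[Lemma~3]{cen2021fast} and \cite{cen2020fast} as standard references, and your derivation is precisely the argument one finds there.

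One minor quibble, unrelated to the proof itself: your parenthetical tightness example ($x_1\equiv 0$, $x_2=\delta\,e_a$) does not actually attain the factor $2$. In that case $\log Z_2 - \log Z_1 = \log\bigl((e^\delta + |\mathcal{A}|-1)/|\mathcal{A}|\bigr) \in [0,\delta)$, and one checks that $\norm{\log\mu_1-\log\mu_2}_\infty$ stays strictly below $2\delta$ (and in fact approaches $\delta$ as $|\mathcal{A}|\to\infty$). The constant $2$ is only asymptotically tight, e.g.\ via $|\mathcal{A}|=2$, $x_1=(\delta,-\delta)$, $x_2=(-\delta,\delta)$ with $\delta\to\infty$. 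This does not affect the lemma or your proof of it.
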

\smallskip

Another useful lemma connects the marginalized utility with the policy, as given below.
\begin{lemma}
    \label{lem:r_lip}
    Given any $\pi, \pi' \in \Delta(\cA)^N$, the difference in the marginalized utility (cf.~\eqref{eq:marginalized_utility}) can be bounded by
    \[
    \norm{r_i^{\pi} - r_i^{\pi'}}_\infty \le \sqrt{J(\pi, \pi')},
    \]
    where $J(\pi, \pi')= \KL{\pi}{\pi'} + \KL{\pi'}{\pi}$ is the Jeffrey divergence.
\end{lemma}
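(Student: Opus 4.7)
The plan is to reduce the bound to a single application of Pinsker's inequality, by first expressing $r_i^\pi(a)-r_i^{\pi'}(a)$ as an integral of a bounded function against the signed measure $\pi_{-i}-\pi'_{-i}$, and then converting the resulting total-variation estimate into a Jeffrey-divergence estimate via Pinsker together with the additivity of KL on product distributions already recorded in the notation section.

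First I would unfold \eqref{eq:marginalized_utility} into
\[
r_i^{\pi}(a)-r_i^{\pi'}(a)=\sum_{a_{-i}\in\cA^{N-1}}\prn{\pi_{-i}(a_{-i})-\pi'_{-i}(a_{-i})}\,u_i(a,a_{-i}).
\]
Since $u_i\in[0,1]$ and $\pi_{-i},\pi'_{-i}$ are both probability measures, I can replace $u_i(a,a_{-i})$ by $u_i(a,a_{-i})-\tfrac12$ without changing the value of the sum; bounding the resulting centered integrand by $\tfrac12$ in absolute value and pulling out the $\ell_1$ norm of $\pi_{-i}-\pi'_{-i}$ gives
\[
\bigl|r_i^{\pi}(a)-r_i^{\pi'}(a)\bigr|\le \tfrac12\normbig{\pi_{-i}-\pi'_{-i}}_1=\normbig{\pi_{-i}-\pi'_{-i}}_{\mathrm{TV}}.
\]

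Next I would apply Pinsker's inequality in both directions,
\[
\normbig{\pi_{-i}-\pi'_{-i}}_{\mathrm{TV}}^{2}\le \tfrac12\KL{\pi_{-i}}{\pi'_{-i}}\quad\text{and}\quad \normbig{\pi_{-i}-\pi'_{-i}}_{\mathrm{TV}}^{2}\le \tfrac12\KL{\pi'_{-i}}{\pi_{-i}},
\]
and average the two estimates to obtain $\normbig{\pi_{-i}-\pi'_{-i}}_{\mathrm{TV}}^{2}\le \tfrac14 J(\pi_{-i},\pi'_{-i})$. By additivity of KL for product distributions, $\KL{\pi_{-i}}{\pi'_{-i}}\le \KL{\pi}{\pi'}$ and similarly in the reverse direction, so $J(\pi_{-i},\pi'_{-i})\le J(\pi,\pi')$. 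Chaining the two inequalities yields $|r_i^{\pi}(a)-r_i^{\pi'}(a)|^{2}\le \tfrac14 J(\pi,\pi')\le J(\pi,\pi')$ for every $a\in\cA$, and taking the supremum over $a$ completes the proof.

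I do not anticipate a genuine obstacle: the lemma is essentially a repackaging of Pinsker together with the additivity of KL, and the argument above in fact has roughly a factor of $2$ to spare. The one step worth being careful about is the centering in the display that follows the expansion, which is what converts the naive bound $|r_i^\pi-r_i^{\pi'}|\le \normbig{\pi_{-i}-\pi'_{-i}}_1$ into the sharper $\le \normbig{\pi_{-i}-\pi'_{-i}}_{\mathrm{TV}}$; without it the constants still close, but only after absorbing them into the slack in the lemma statement.
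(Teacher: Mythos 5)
Your proof is correct and follows essentially the same route as the paper's: bound $|r_i^{\pi}(a)-r_i^{\pi'}(a)|$ by a total-variation distance between $\pi_{-i}$ and $\pi'_{-i}$, convert to KL via Pinsker's inequality applied in both directions, and pass from the marginals to the full product distributions by additivity of KL. The only difference is your centering of $u_i$ at $\tfrac12$, which buys a spare factor of $2$; the paper instead uses the uncentered bound $2\norm{u_i}_\infty d_{TV}$ and closes the constant by writing $4d_{TV}^2 = 2d_{TV}^2 + 2d_{TV}^2 \le \KL{\pi_{-i}}{\pi'_{-i}} + \KL{\pi'_{-i}}{\pi_{-i}}$, so both arguments land on the stated bound.
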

\smallskip
\begin{proof}
See Appendix~\ref{sec:pf:lem:r_lip}.
\end{proof}

\subsection{Step 1: quantify the policy improvement} 

We start by the following key lemma that gives a lower bound of the improvement in terms of the regularized potential function $\Phi_\tau^{(t)}$.

\begin{lemma}
    \label{lem:bandit_perf_impv}
  The independent NPG update \eqref{eq:NPG_update} guarantees that
    \begin{align*}
        &\Phi_\tau^{(t+1)} - \Phi_\tau^{(t)}\ge \prn{\frac{1}{\eta}-\min\{\sqrt{N}, 2\Phi_{\max}\}-\tau} J\prn{\pi^{(t+1)},\pi^{(t)}}.
    \end{align*}
\end{lemma}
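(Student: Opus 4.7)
}
The plan is to decompose $\Phi_\tau^{(t+1)} - \Phi_\tau^{(t)}$ via an agent-by-agent telescoping. Introduce the interpolants $\pi^{(t,k)} := (\pi_1^{(t+1)},\dots,\pi_k^{(t+1)},\pi_{k+1}^{(t)},\dots,\pi_N^{(t)})$ for $k=0,\dots,N$, so that $\pi^{(t,0)}=\pi^{(t)}$ and $\pi^{(t,N)}=\pi^{(t+1)}$, and consecutive members differ only in the $i$-th agent's policy. Since the game is potential (with the entropy-regularized identity inherited from the unregularized one), each one-agent increment in $\Phi_\tau$ equals the corresponding increment in $u_{i,\tau}$, giving
\[
\Phi_\tau^{(t+1)} - \Phi_\tau^{(t)} = \sum_{i=1}^{N}\big[u_{i,\tau}(\pi_i^{(t+1)},\pi_{-i}^{(t,i-1)}) - u_{i,\tau}(\pi_i^{(t)},\pi_{-i}^{(t,i-1)})\big].
\]

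The awkward feature of this sum is that the opponents' policies for agent $i$ are $\pi_{-i}^{(t,i-1)}$ (already partially updated), whereas the NPG update driving $\pi_i^{(t+1)}$ uses the un-updated marginalized utility $r_i^{(t)} = r_i^{\pi^{(t)}}$. I therefore split each summand into a ``clean'' contribution evaluated at $\pi_{-i}^{(t)}$ plus a cross-term error; because the entropy piece $\tau\cH(\pi_i^{(t+1)})-\tau\cH(\pi_i^{(t)})$ depends only on $\pi_i$, it cancels in the split, leaving the error as $\langle \pi_i^{(t+1)}-\pi_i^{(t)},\, r_i^{\pi^{(t,i-1)}}-r_i^{(t)}\rangle$. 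For the clean part, I rewrite the NPG update \eqref{eq:NPG_update} as the Bregman-proximal optimizer of $\eta u_{i,\tau}(\cdot,\pi_{-i}^{(t)}) - (1-\eta\tau)\KL{\cdot}{\pi_i^{(t)}}$; the standard three-point identity applied at the test point $\pi_i^{(t)}$ then yields
\[
u_{i,\tau}(\pi_i^{(t+1)},\pi_{-i}^{(t)}) - u_{i,\tau}(\pi_i^{(t)},\pi_{-i}^{(t)}) \ge \tfrac{1}{\eta} J(\pi_i^{(t+1)},\pi_i^{(t)}) - \tau \KLbig{\pi_i^{(t+1)}}{\pi_i^{(t)}}.
\]
Summing in $i$, using the additivity of KL/Jeffrey divergence across product factors and the elementary inequality $\KL{\cdot}{\cdot}\le J(\cdot,\cdot)$, the clean term contributes at least $\big(\tfrac{1}{\eta}-\tau\big)\,J(\pi^{(t+1)},\pi^{(t)})$.

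The main obstacle is controlling the aggregated cross-term $E := \sum_i \langle \pi_i^{(t+1)}-\pi_i^{(t)},\, r_i^{\pi^{(t,i-1)}}-r_i^{(t)}\rangle$, for which I need $|E|\le \min\{\sqrt N,\,2\Phi_{\max}\}\cdot J(\pi^{(t+1)},\pi^{(t)})$. I plan two independent bounds. For the $\sqrt N$ bound, Lemma~\ref{lem:r_lip} gives $\|r_i^{\pi^{(t,i-1)}}-r_i^{(t)}\|_\infty \le \sqrt{J(\pi^{(t,i-1)},\pi^{(t)})} = \sqrt{\sum_{j<i}J_j}$ (using that $\pi^{(t,i-1)}$ and $\pi^{(t)}$ differ only in agents $1,\dots,i-1$), combined with the refined Pinsker inequality $\|\pi_i^{(t+1)}-\pi_i^{(t)}\|_1\le \sqrt{J_i}$ on each factor; Cauchy--Schwarz then produces $|E|\le \sqrt{\sum_i J_i}\cdot\sqrt{\sum_i\sum_{j<i}J_j}\le \sqrt N\,J(\pi^{(t+1)},\pi^{(t)})$. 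For the $2\Phi_{\max}$ bound, I exploit the potential identity $r_i^\pi(a)=\Phi(a,\pi_{-i})+c(\pi_{-i})$: the $a$-independent constant is annihilated by the zero-sum vector $\pi_i^{(t+1)}-\pi_i^{(t)}$, so only the $\Phi(\cdot,\pi_{-i}^{(t,i-1)})-\Phi(\cdot,\pi_{-i}^{(t)})$ part contributes; since $\Phi\in[0,\Phi_{\max}]$, its $\ell_\infty$ range on the product simplex is at most $2\Phi_{\max}$, and one more Cauchy--Schwarz-style aggregation (together with the product-measure bound on TV distance) yields $|E|\le 2\Phi_{\max}\,J(\pi^{(t+1)},\pi^{(t)})$. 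Taking the minimum of the two bounds and combining with the clean-term estimate, while absorbing $\tau\KL{\pi^{(t+1)}}{\pi^{(t)}}\le\tau J(\pi^{(t+1)},\pi^{(t)})$, gives the stated inequality.
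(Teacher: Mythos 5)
Your telescoping decomposition, the Bregman/three-point treatment of the ``clean'' term (which reproduces the paper's identity $\innprod{r_i^{(t)},\pi_i^{(t+1)}-\pi_i^{(t)}}+\tau(\cH(\pi_i^{(t+1)})-\cH(\pi_i^{(t)}))=(\tfrac1\eta-\tau)\KL{\pi_i^{(t+1)}}{\pi_i^{(t)}}+\tfrac1\eta\KL{\pi_i^{(t)}}{\pi_i^{(t+1)}}$), and your $\sqrt N$ bound on the cross-term are all sound and essentially match the paper's first half (the paper uses Young's inequality where you use Cauchy--Schwarz; both land on $\sqrt N\,J(\pi^{(t+1)},\pi^{(t)})$).

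The gap is in the $2\Phi_{\max}$ half. You try to extract the factor $2\Phi_{\max}$ from the \emph{same} sequential cross-term $E=\sum_i\innprod{\pi_i^{(t+1)}-\pi_i^{(t)},\,r_i^{\pi^{(t,i-1)}}-r_i^{(t)}}$ by replacing the generic payoff bound with $\Phi_{\max}$ and doing ``one more Cauchy--Schwarz-style aggregation.'' This cannot work: any H\"older pairing of $\normbig{\pi_i^{(t+1)}-\pi_i^{(t)}}_1$ against $\normbig{\Phi(\cdot,\pi_{-i}^{(t,i-1)})-\Phi(\cdot,\pi_{-i}^{(t)})}_\infty\le 2\Phi_{\max}\,d_{TV}(\pi_{-i}^{(t,i-1)},\pi_{-i}^{(t)})$ produces a sum of the form $\Phi_{\max}\sum_i d_i\sum_{j<i}d_j\lesssim\Phi_{\max}\prnbig{\sum_i d_i}^2$ with $d_i:=d_{TV}(\pi_i^{(t+1)},\pi_i^{(t)})$. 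There are $\Theta(N^2)$ off-diagonal products $d_id_j$ but only $N$ diagonal quantities $J(\pi_i^{(t+1)},\pi_i^{(t)})\gtrsim d_i^2$ available to absorb them, so in the worst case (all $d_i$ equal) one gets $\prn{\sum_i d_i}^2=\Theta(N)\cdot J(\pi^{(t+1)},\pi^{(t)})$; the sharpest aggregation along your lines gives $|E|\le\sqrt N\,\Phi_{\max}\,J(\pi^{(t+1)},\pi^{(t)})$, which is strictly worse than the $\sqrt N$ bound you already have and never yields an $N$-free constant. The paper's proof of \eqref{eq:perf_impv_part2} uses a structurally different decomposition: it pairs every agent against the \emph{un-updated} opponents $\pi_{-i}^{(t)}$ (so the clean term is exact and there is no sequential cross-term), and bounds the resulting global residual $\Phi^{(t+1)}-\Phi^{(t)}-\sum_i\brk{\Phi(\pi_i^{(t+1)},\pi_{-i}^{(t)})-\Phi^{(t)}}$ by expanding $\pi^{(t+1)}(\bm a)/\pi^{(t)}(\bm a)=\prod_i\pi_i^{(t+1)}(a_i)/\pi_i^{(t)}(a_i)$ and invoking the elementary inequality $0\le x-\log(1+x)\le x\log(1+x)$ (Lemma~\ref{lem:log_magic}); the entire higher-order remainder then collapses to $J(\pi^{(t+1)},\pi^{(t)})+\sum_iJ(\pi_i^{(t+1)},\pi_i^{(t)})=2J(\pi^{(t+1)},\pi^{(t)})$, paying $\Phi_{\max}$ but no factor of $N$. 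You need an argument of this kind (or another mechanism that cancels the $\Theta(N^2)$ pairwise interactions) to obtain the $2\Phi_{\max}$ branch of the minimum.
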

\smallskip
\begin{proof}
See Appendix~\ref{sec:pf:lem:bandit_perf_impv}.
\end{proof}

Lemma \ref{lem:bandit_perf_impv} ensures the monotonic improvement of the regularized potential function $\Phi_{\tau}$ when $\eta$ is not too large. Specifically, setting $\eta \le 1/(2(\min\{\sqrt{N}, 2\Phi_{\max}\}+\tau))$, we have
\begin{align*}
    &\Phi_\tau^{(t+1)} - \Phi_\tau^{(t)} \ge \frac{1}{2\eta} J\prn{\pi^{(t+1)},\pi^{(t)}},
\end{align*}
which is guaranteed to be non-negative. Summing the above inequality over $t = 0, \cdots, T-1$ gives
\begin{equation}
\begin{aligned}
    &\sum_{t=0}^{T-1} J\prn{\pi^{(t+1)},\pi^{(t)}} \le 2\eta(\Phi_\tau^{(T)} - \Phi_\tau^{(0)}),
    \label{eq:KL_sum_bound}        
\end{aligned}    
\end{equation}
which controls the change of $\pi^{(t)}$ over time $t$ via the size of the regularized potential function.

\subsection{Step 2: introduce the auxiliary sequence}

Motivated by \cite{cen2021fast,cen2020fast}, we introduce an auxiliary sequence $\{\xi_i^{(t)}\in \mathbb{R}^{|\mathcal{A}|}, \; i \in [N]\}$, constructed recursively by
\begin{subequations}
    \begin{align}
        \xi_i^{(0)}(a) &= \norm{\exp(r_i^{(0)}/\tau)}_1 \cdot \pi_i^{(0)}(a),\\
        \xi_i^{(t+1)}(a) &= {\xi_i^{(t)}}(a)^{1-\eta\tau}\exp(\eta r_i^{(t)}(a)). \label{eq:xi_def}
    \end{align}
\end{subequations}
Compared with the independent NPG update rule \eqref{eq:NPG_update}, it is clear that $\xi_i^{(t)} \propto \pi_i^{(t)}$ up to normalization. In addition, we have
\begin{align*}
     \log\xi_i^{(t+1)} - r_i^{(t+1)}/\tau  
    &= (1-\eta\tau) \log \xi_i^{(t)} + \eta r_i^{(t)} -  r_i^{(t+1)}/\tau\\
    &= (1-\eta\tau) \prn{\log \xi_i^{(t)} - r_i^{(t)}/\tau} + \prn{r_i^{(t)} -  r_i^{(t+1)}}/\tau,
\end{align*}
which implies 
\begin{align}
     \norm{\log\xi_i^{(t+1)} - r_i^{(t+1)}/\tau}_\infty  
    &\le (1-\eta\tau) \norm{\log \xi_i^{(t)} - r_i^{(t)}/\tau}_\infty + \norm{r_i^{(t)} -  r_i^{(t+1)}}_\infty/\tau \nonumber \\
    &\le (1-\eta\tau)^{t+1}\norm{\log \xi_i^{(0)} - r_i^{(0)}/\tau}_\infty  + \tau^{-1} \sum_{s=0}^{t}(1-\eta\tau)^{t-s}\norm{r_i^{(s)} -  r_i^{(s+1)}}_\infty \nonumber\\
    &\le (1-\eta\tau)^{t+1}\norm{\log \xi_i^{(0)} - r_i^{(0)}/\tau}_\infty  + \tau^{-1} \sum_{s=0}^{t}(1-\eta\tau)^{t-s} \sqrt{J\prn{\pi^{(s+1)},\pi^{(s)}}},\label{eq:recursion}
\end{align}
where the last line follows by applying Lemma \ref{lem:r_lip} to the last term by setting $\pi = \pi^{(t)}$ and $\pi' = \pi^{(t+1)}$.

\subsection{Step 3: bound the gap}

Note that by the definition of the best-response policy in \eqref{eq:def_br}, the term of interest in $ \texttt{QRE-gap}_\tau^{(t)} $ can be controlled as
\begin{align*}
  u_{i,\tau}(\pi_i^{\star(t+1)},\, \pi_{-i}^{(t+1)}) - u_{i,\tau}(\pi_i^{(t+1)},\, \pi_{-i}^{(t+1)})   
    &= \innprod{\pi_i^{\star(t+1)} - \pi_i^{(t+1)},\, r_i^{(t+1)}} + \tau \cH(\pi_i^{\star(t+1)}) - \tau\cH(\pi_i^{(t+1)})\\
    &= \tau \KL{\pi_i^{(t+1)}}{\pi_i^{\star(t+1)}} \le \tau\norm{\log\pi_i^{(t+1)} - \log \pi_i^{\star(t+1)}}_\infty \\
    &  \le 2\tau \norm{\log\xi_i^{(t+1)} - r_i^{(t+1)}/\tau}_\infty,
\end{align*}
where the first line follows from the definition \eqref{eq:marginalized_utility}, the second step results from a direct consequence of \eqref{eq:best_response}:
\begin{align*}
 \innprod{\pi_i^{\star(t+1)} - \pi_i^{(t+1)},\, r_i^{(t+1)}}  & =  \innprod{\pi_i^{\star(t+1)} - \pi_i^{(t+1)},\, \tau \log \pi_i^{\star(t+1)} } 
\end{align*}
with a little algebra, and the last line follows from Lemma \ref{lem:log_pi_gap}. 
Taking maximum over $i \in [N]$, in conjunction with \eqref{eq:recursion}, we end up with
\begin{align*}
    {\texttt{QRE-gap}_{\tau}^{(t+1)}}&\le 2\tau(1-\eta\tau)^{t+1}\norm{\log \pi^{(0)} - \log\pi^{\star(0)}}_\infty + 2\sum_{s=0}^{t} (1-\eta\tau)^{t-s} \sqrt{J\prn{\pi^{(s+1)},\pi^{(s)}}}.
\end{align*}
%

Summing the inequality over $t=0,\ldots, T-1$ gives
\begin{align*}
    &\sum_{t=0}^{T-1} \texttt{QRE-gap}_{\tau}^{(t+1)}\\
    &\le 2\tau\sum_{t=0}^{T-1} (1-\eta\tau)^{t+1} \max_{i\in[N]} \norm{\log \pi_i^{(0)} - \log \pi_i^{\star(0)}}_\infty + 2 \sum_{t=0}^{T-1}\sum_{s=0}^{t}(1-\eta\tau)^{t-s} \sqrt{J\prn{\pi^{(s+1)},\pi^{(s)}}}\\
    &\le \frac{2}{\eta\tau}\Big(\tau\norm{\log \pi^{(0)} - \log \pi^{\star(0)}}_\infty + \sum_{s=0}^{T-1}\sqrt{J\prn{\pi^{(s+1)},\pi^{(s)}}}\Big).
\end{align*}
The proof is thus completed by noticing
\begin{align*}
    \sum_{s=0}^{T-1}\sqrt{J\prn{\pi^{(s+1)},\pi^{(s)}}} &\le \sqrt{T\sum_{s=0}^{T-1} J\prn{\pi^{(s+1)},\pi^{(s)}}} \le \sqrt{2\eta T(\Phi_\tau^{(T)} - \Phi_\tau^{(0)})}.
\end{align*}
Here, the second step results from Pinsker's inequality, and the last line follows from  \eqref{eq:KL_sum_bound}.


\section{Numerical Experiments}
\label{sec:numerical}

We examine the performance of independent NPG methods --- in comparison with policy gradient (PG) methods with direct parametrization --- on a potential game with $N = 4$ agents and an action space with $|\cA| = 20$ actions. The potential function $\Phi(\bm{a})$ is independently drawn from a Beta distribution $\text{Beta}(\frac{1}{2},\frac{1}{2})$ for each $\bm{a} \in \cA^N$. We set the learning rate as $\eta = \frac{1}{2(\sqrt{N}+\tau)}$ for the independent NPG methods, while PG with direct parametrization adopts $\eta = \frac{1}{2N|\cA|}$, which is the maximum possible learning rate prescribed in \cite{zhang2021gradient, leonardos2021global, ding2022independent}. Figure \ref{fig:potential} verifies the monotonic improvement of regularized potential function $\Phi_\tau^{(t)}$ for the NPG method, corroborating our theoretical analysis.

Figure \ref{fig:NE} shows the averaged performance of various methods over 10 independent runs, in terms of finding approximate NE for the unregularized potential games, and Figure \ref{fig:QRE} plots that of finding QRE. While independent NPG with a larger regularization parameter $\tau = 10^{-2}$ converges to QRE faster, its $\texttt{NE-gap}$ stalls due to the presence of large entropy regularization. 
With $\tau = 10^{-3}$, NPG achieves a better trade-off and finds a much smaller $\texttt{NE-gap}$. %

\begin{figure}[h]
    \centering
    \includegraphics[width=0.5\linewidth]{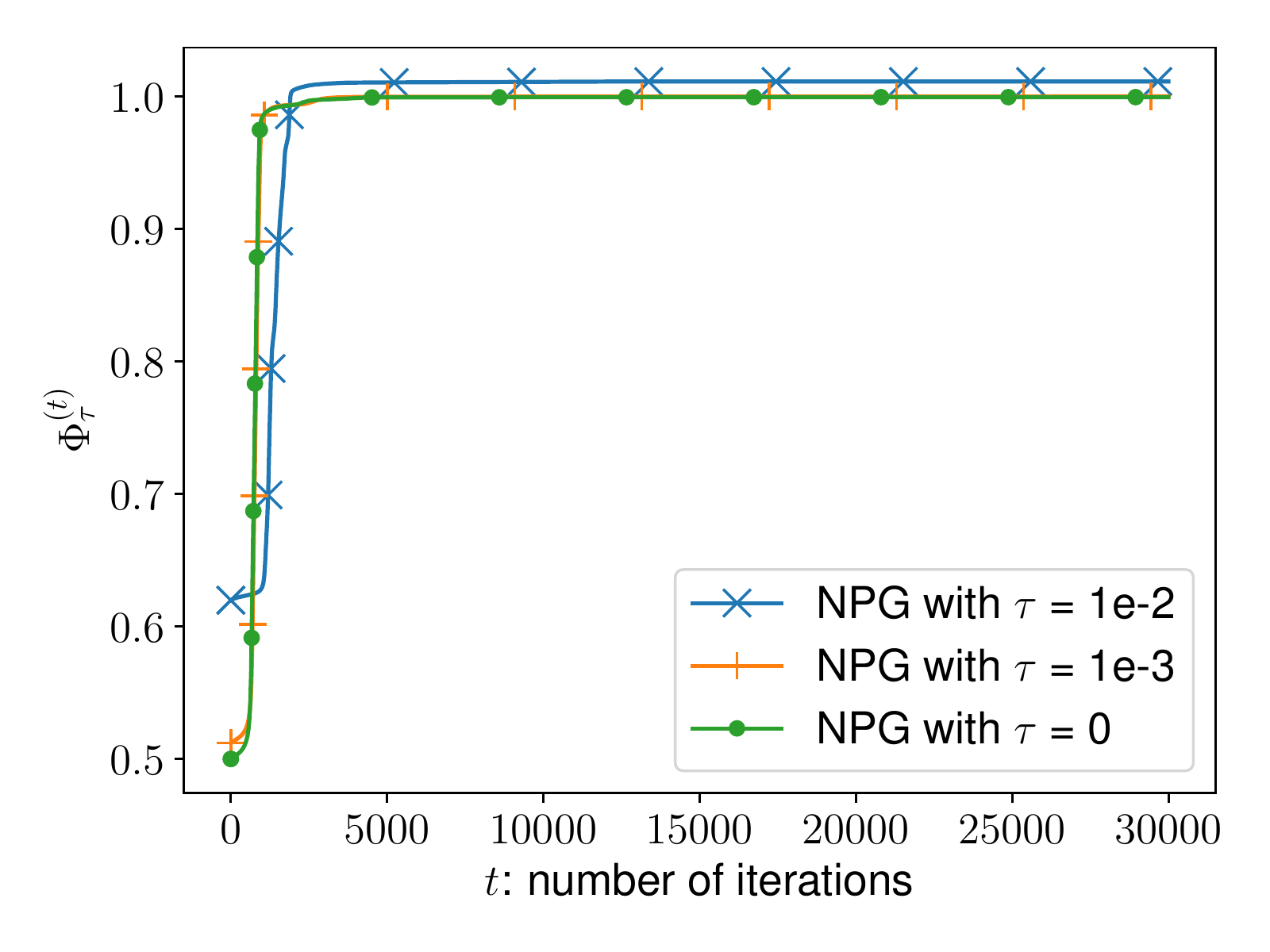}
    \caption{The regularized potential function $\Phi_\tau^{(t)}$ versus the iteration count of NPG with various entropy regularization parameter $\tau$ and softmax parameterization.}
    \label{fig:potential}
\end{figure}

\begin{figure}[h]
    \centering
    \includegraphics[width=0.5\linewidth]{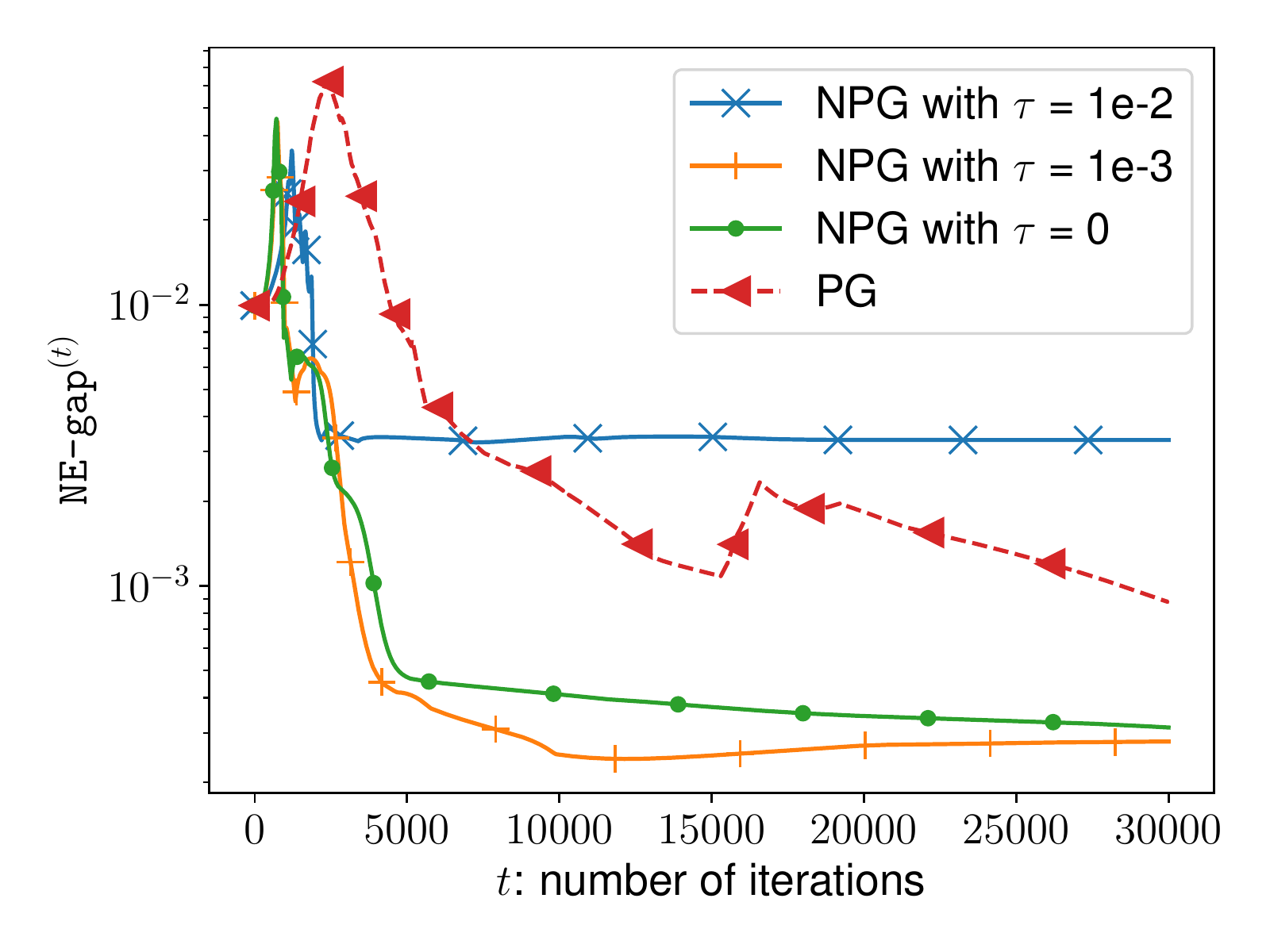}
    \caption{$\texttt{NE-gap}^{(t)}$ versus the iteration count of unregularized PG with direct parameterization and NPG with various entropy regularization parameter $\tau$ and softmax parameterization.}
    \label{fig:NE} 
\end{figure}

\begin{figure}[h]
    \centering
    \includegraphics[width=0.5\linewidth]{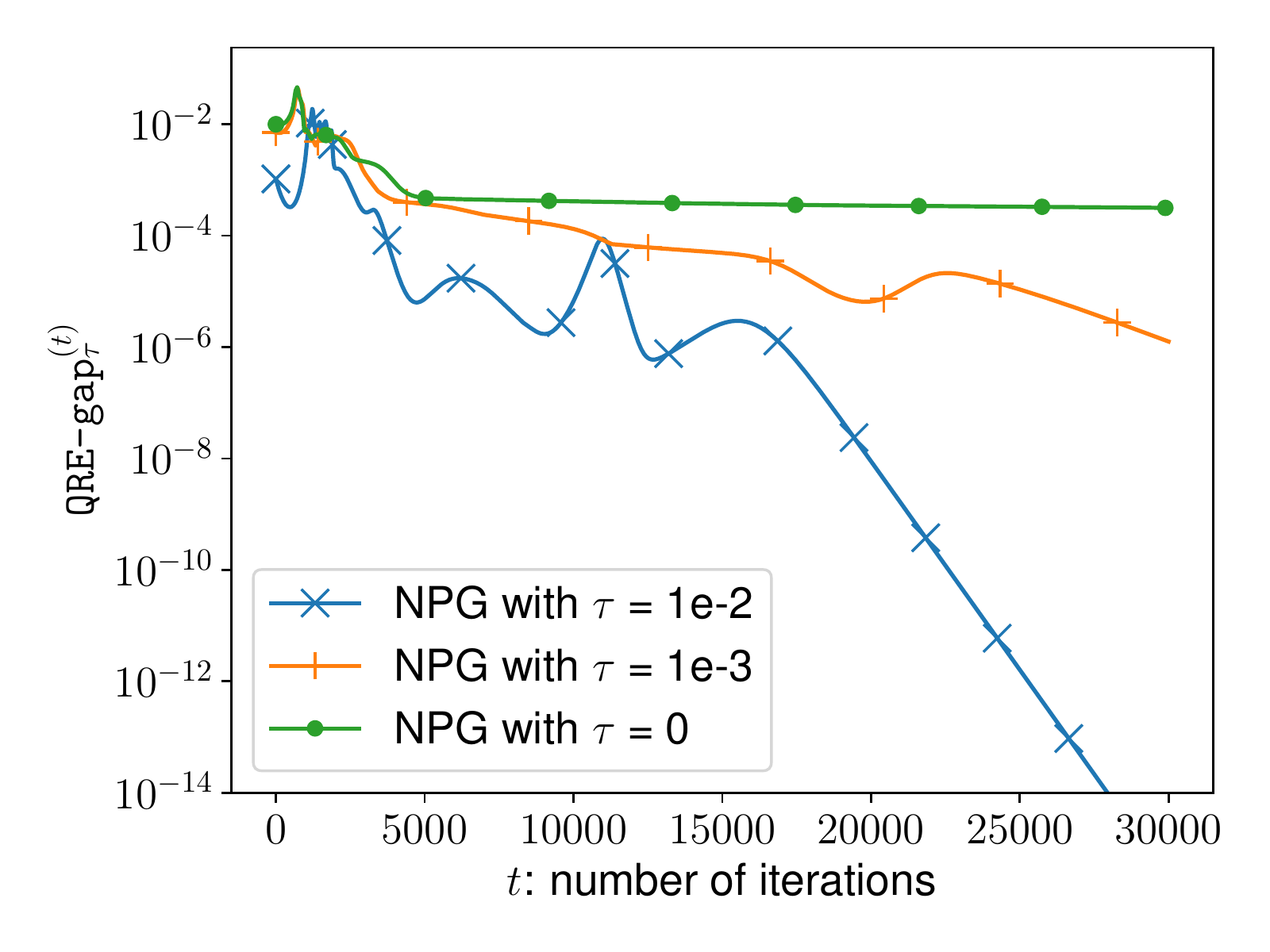}
    \caption{$\texttt{QRE-gap}_\tau^{(t)}$ versus the iteration count of NPG with various entropy regularization parameter $\tau$ and softmax parameterization.}
    \label{fig:QRE}
\end{figure}

\section{Conclusions and Discussions}
\label{sec:conclusion}
This paper studies independent NPG methods for entropy-regularized potential games and develops a sublinear rate of convergence to quantum response equilibrium, which is independent of the size of the action spaces up to logarithmic factors and grows only sublinearly with respect to the number of agents. In addition, the method achieves the first {\em dimension-free} convergence rate for the important special case of identical-interest games, where the rate is independent of both the size of the action space and the number of agents. The approach can also be used as a smoothing technique to find Nash equilibria by setting the regularization parameter sufficiently small, without imposing the isolation assumption as often required in prior works. This work leaves open a number of interesting questions: 
\begin{itemize}
\item Can we tighten the convergence rate in terms of the dependencies on $\epsilon$? 
\item Can we extend the analysis to establish finite-time global convergence for Markov potential games? 
\end{itemize}
We leave the answers to future work.

       
\section*{Acknowledgments}
The work of S.~Cen and Y.~Chi is supported in part by the grants ONR N00014-19-1-2404, ARO W911NF-18-1-0303, NSF CCF-1901199, CCF-2007911, CCF-2106778 and CNS-2148212. S.~Cen is also gratefully supported by Wei Shen and Xuehong Zhang Presidential Fellowship, and Nicholas Minnici Dean's Graduate Fellowship in Electrical and Computer Engineering at Carnegie Mellon University.
F.~Chen is supported by the Elite Undergraduate Training Program of School of Mathematical Sciences at Peking University.

\bibliographystyle{alphaabbr}
\bibliography{bibfileGame,bibfileRL}

\appendix

\section{Proof of Lemma \ref{lem:r_lip}}
\label{sec:pf:lem:r_lip}
    Given any $\pi, \pi' \in \Delta(\cA)^N$, we have
    \begin{equation}
    \begin{aligned}
         \left|r_i^{\pi}(a) - r_i^{\pi'}(a)\right|        &=\left|\ex{a_{-i}\sim {\pi}_{-i}}{u_i(a, \,a_{-i})} - \ex{a_{-i}\sim {\pi}_{-i}'}{u_i(a, \,a_{-i})}\right|\\
        &\overset{\mathrm{(i)}}{\le} 2\norm{u_i}_\infty d_{TV}\prn{{\pi}_{-i},\, {\pi}'_{-i}}\\
        &\le \sqrt{2 d_{TV}\prn{{\pi}_{-i},\, {\pi}'_{-i}}^2 + 2d_{TV}\prn{{\pi}'_{-i},\, {\pi}_{-i}}^2}\\
        &\overset{\mathrm{(ii)}}{\le} \sqrt{\KL{\pi_{-i}}{\pi'_{-i}} + \KL{\pi'_{-i}}{\pi_{-i}}} \\
        &\le \sqrt{\KL{\pi}{\pi'} + \KL{\pi'}{\pi}} =  \sqrt{J(\pi, \pi')},
        \label{eq:r_tilde_diff}    
    \end{aligned}
    \end{equation}
    where $d_{TV}\prn{\cdot,\,\cdot}$ refers to total variation distance. Here, $(\mathrm{i})$ follows from applying $\left|\int_\Omega h {\rm d}\mu - \int_\Omega h {\rm d}\nu \right| \le 2 d_{TV}(\mu, \nu) \norm{h}_\infty$ which holds for any probability measures $\mu$, $\nu$ and bounded measurable function $h: \Omega \to \mathbb{R}$ (see e.g., \cite[Corollary 13.4]{Driver2007notes}), and $(\mathrm{ii})$ results from Pinsker's inequality.  

\section{Proof of Lemma \ref{lem:bandit_perf_impv}}
\label{sec:pf:lem:bandit_perf_impv}

The proof is composed of two parts, each establishing the following bounds 
\begin{subequations} \label{eq:ensemble}
\begin{align}
\Phi_\tau^{(t+1)} - \Phi_\tau^{(t)} & \ge \prn{\frac{1}{\eta}-  \sqrt{N} -\tau} J\prn{\pi^{(t+1)},\pi^{(t)}}, \label{eq:perf_impv_part1} \\
\Phi_\tau^{(t+1)} - \Phi_\tau^{(t)}& \ge \prn{\frac{1}{\eta}-  2\Phi_{\max}-\tau} J\prn{\pi^{(t+1)},\pi^{(t)}} \label{eq:perf_impv_part2} 
\end{align}
\end{subequations}
respectively. Combining the two bounds then finishes the proof.

\subsection{Proof of \eqref{eq:perf_impv_part1} }
We introduce
\begin{equation*}
    \widetilde{\pi}_{-i}^{(t)}(a_{-i}) = \prod_{j<i}\pi_j^{(t)}(a_j) \prod_{k>i}\pi_k^{(t+1)}(a_k) \in \Delta(\cA)^{N-1}
\end{equation*}
to denote the mixed strategy profile (except that of agent $i$) where the agents with index $j<i$ follow $\pi_j^{(t)}$ and the agents with index $k>i$ follow $\pi_k^{(t+1)}$ instead. Let $\widetilde{r}_i^{(t)}$ be the associated marginalized utility function, i.e.,
\begin{align} 
        \widetilde{r}_i^{(t)}(a) &= \exlim{a_i = a, a_{-i}\sim \widetilde{\pi}_{-i}^{(t)}}{u_i(\bm{a})} \label{eq:def_tilde_br}\\
        &= \sum_{a_{-i} \in \mathcal{A}^{N-1}}  u_i(a,\, a_{-i})\prod_{j<i}\pi_j^{(t)}(a_j) \prod_{k>i}\pi_k^{(t+1)}(a_k). \nonumber
\end{align}
It follows from the above definition that we have
\begin{align}
    \Phi_\tau (\pi_{i}^{(t)}, \widetilde{\pi}_{-i}^{(t)}) &= \Phi_\tau (\pi_1^{(t)},\cdots,\pi_{i}^{(t)},\pi_{i+1}^{(t+1)},\cdots,\pi_N^{(t+1)})  = \Phi_\tau (\pi_{i+1}^{(t+1)}, \widetilde{\pi}_{-(i+1)}^{(t)}) \label{eq:phi_transfer}
\end{align}
for $i \in [N-1]$.

We now decompose $\Phi_\tau^{(t+1)} - \Phi_\tau^{(t)}$ as follows:
\begin{align*}
    \Phi_\tau^{(t+1)} - \Phi_\tau^{(t)}
    &= \Phi_\tau(\pi_{1}^{(t+1)}, \widetilde{\pi}_{-1}^{(t)})- \Phi_\tau(\pi_N^{(t)}, \widetilde{\pi}_{-N}^{(t)})\\
    &= \Phi_\tau(\pi_{1}^{(t+1)}, \widetilde{\pi}_{-1}^{(t)}) - \Phi_\tau(\pi_{1}^{(t)}, \widetilde{\pi}_{-1}^{(t)}) + \Phi_\tau(\pi_{1}^{(t)}, \widetilde{\pi}_{-1}^{(t)}) -\Phi_\tau(\pi_N^{(t)}, \widetilde{\pi}_{-N}^{(t)})\\
    &\overset{\mathrm{(i)}}{=} \brk{\Phi_\tau(\pi_{1}^{(t+1)}, \widetilde{\pi}_{-1}^{(t)}) - \Phi_\tau(\pi_{1}^{(t)}, \widetilde{\pi}_{-1}^{(t)})}  + \Phi_\tau(\pi_{2}^{(t+1)}, \widetilde{\pi}_{-2}^{(t)}) -\Phi_\tau(\pi_N^{(t)}, \widetilde{\pi}_{-N}^{(t)})\\
    &\overset{\mathrm{(ii)}}{=}  \sum_{i=1}^N\brk{\Phi_\tau(\pi_i^{(t+1)},\widetilde{\pi}_{-i}^{(t)}) - \Phi_\tau(\pi_i^{(t)}, \widetilde{\pi}_{-i}^{(t)}) }\\
    &= \sum_{i=1}^N\brk{u_{i,\tau}(\pi_i^{(t+1)},\widetilde{\pi}_{-i}^{(t)}) - u_{i,\tau}(\pi_i^{(t)}, \widetilde{\pi}_{-i}^{(t)}) }\\
    &= \sum_{i=1}^N\brk{\innprod{\widetilde{r}_i^{(t)}, \pi_i^{(t+1)} - \pi_i^{(t)}} + \tau \prn{\mathcal{H}(\pi_i^{(t+1)}) - \mathcal{H}(\pi_i^{(t)})}},
\end{align*}
where (i) follows from \eqref{eq:phi_transfer}, and (ii) follows from repeating the above process over all agents, and the last line follows from \eqref{eq:def_tilde_br}. For every $i \in [N]$, we have
\begin{align*}
    &\innprod{\widetilde{r}_i^{(t)}, \pi_i^{(t+1)} - \pi_i^{(t)}}+ \tau \prn{\mathcal{H}(\pi_i^{(t+1)}) - \mathcal{H}(\pi_i^{(t)})}\\
    &= {\innprod{r_i^{(t)}, \pi_i^{(t+1)} - \pi_i^{(t)}}+ \tau \prn{\mathcal{H}(\pi_i^{(t+1)}) - \mathcal{H}(\pi_i^{(t)})}} + \innprod{\widetilde{r}_i^{(t)} - r_i^{(t)}, \pi_i^{(t+1)} - \pi_i^{(t)}}.
\end{align*}
We control the terms separately.
\begin{itemize}
\item For the first two terms, recall that taking logarithm on both sides of \eqref{eq:NPG_update} gives
\begin{equation*}
    \eta r^{(t)}_i = \log \pi_i^{(t+1)} - (1-\eta\tau) \log \pi_i^{(t)} + c \mathbf{1}
\end{equation*}
for some constant $c$. It follows that
\begin{align}
     \langle r^{(t)}_i, \pi^{(t+1)}_i - \pi^{(t)}_i\rangle + \tau \prn{\mathcal{H}(\pi_i^{(t+1)}) - \mathcal{H}(\pi_i^{(t)})} 
    &= \frac{1}{\eta}\langle \log \pi^{(t+1)}_i - \log\pi^{(t)}_i, \pi^{(t+1)}_i - \pi^{(t)}_i\rangle\notag\\
    &\quad+ \tau \prn{\innprod{\log \pi_i^{(t)}, \pi_i^{(t+1)} - \pi_i^{(t)}} + \mathcal{H}(\pi_i^{(t+1)}) - \mathcal{H}(\pi_i^{(t)})}\notag\\
    &= \prn{\frac{1}{\eta} - \tau} \KL{\pi^{(t+1)}_i}{\pi^{(t)}_i}  + \frac{1}{\eta}\KL{\pi^{(t)}_i}{\pi^{(t+1)}_i}.
    \label{eq:stationary}
\end{align}

\item For the third term, according to \eqref{eq:r_tilde_diff}, we have
\begin{equation*}
    \left|\widetilde{r}_i^{(t)}(a) - r_i^{(t)}(a)\right|
    \le 2 d_{TV}(\widetilde{\pi}_{-i}^{(t)}, \pi_{-i}^{(t)}).
\end{equation*}
Hence,
\begin{align*}
    \sum_{i=1}^N\big|\innprod{\widetilde{r}_i^{(t)} - r_i^{(t)}, \pi_i^{(t+1)} - \pi_i^{(t)}}\big| 
    &\le 2 \sum_{i=1}^Nd_{TV}(\widetilde{\pi}_{-i}^{(t)}, \pi_{-i}^{(t)}) \norm{\pi^{(t+1)}_i - \pi^{(t)}_i}_1\\
    &\overset{\mathrm{(i)}}{\le} \frac{2}{\sqrt{N}}\sum_{i=1}^N d_{TV}(\pi_{-i}^{(t)}, \widetilde{\pi}_{-i}^{(t)})^2 + \frac{\sqrt{N}}{2}\sum_{i=1}^N \norm{\pi^{(t+1)}_i - \pi^{(t)}_i}_1^2\\
    &\overset{\mathrm{(ii)}}{\le} \frac{1}{\sqrt{N}}  \sum_{i=1}^N \KL{{\pi}_{-i}^{(t)}}{\widetilde{\pi}_{-i}^{(t)}} + \sqrt{N}\sum_{i=1}^N \KL{\pi^{(t+1)}_i}{\pi^{(t)}_i}\\
    &\le\frac{1}{\sqrt{N}}  \sum_{i=1}^N \KL{{\pi}^{(t)}}{\pi^{(t+1)}} + \sqrt{N} \KL{\pi^{(t+1)}}{\pi^{(t)}}\\
    &= \sqrt{N} \prn{\KL{\pi^{(t+1)}}{\pi^{(t)}} + \KL{\pi^{(t)}}{\pi^{(t+1)}}} = \sqrt{N} J(\pi^{(t+1)}, \pi^{(t)}),
\end{align*}
where (i) results from Young's inequality and (ii) is due to Pinsker's inequality.

\end{itemize}

Combining all pieces together, we have
\begin{align*}
    \Phi_\tau^{(t+1)} - \Phi_\tau^{(t)} 
    &\ge \prn{\frac{1}{\eta}-\tau}\sum_{i=1}^N\left[ \KL{\pi^{(t+1)}_i}{\pi^{(t)}_i}  + \KL{\pi^{(t)}_i}{\pi^{(t+1)}_i}\right]- \sqrt{N}J(\pi^{(t+1)}, \pi^{(t)})\\
    &\ge \prn{\frac{1}{\eta} - \sqrt{N} - \tau}J(\pi^{(t+1)}, \pi^{(t)}).
\end{align*}


\subsection{Proof of \eqref{eq:perf_impv_part2} }

Alternatively, we can decompose $\Phi_\tau^{(t+1)} - \Phi_\tau^{(t)}$ as
\begin{align*}
    &\Phi_\tau^{(t+1)} - \Phi_\tau^{(t)}\\
    &=\Phi^{(t+1)} - \Phi^{(t)} + \tau \sum_{i=1}^N \brk{ \cH(\pi_i^{(t+1)}) - \cH(\pi_i^{(t)})}\\
    &= \sum_{i=1}^{N} \brk{\Phi(\pi_i^{(t+1)}, \pi_{-i}^{(t)}) - \Phi^{(t)} + \tau \cH(\pi_i^{(t+1)}) - \tau \cH(\pi_i^{(t)})} + \Phi^{(t+1)} - \Phi^{(t)} - \sum_{i=1}^{N} \brk{\Phi(\pi_i^{(t+1)}, \pi_{-i}^{(t)}) - \Phi^{(t)}}\\
    &= \sum_{i=1}^{N} \brk{u_i(\pi_i^{(t+1)}, \pi_{-i}^{(t)}) - u_i(\pi^{(t)}) + \tau \cH(\pi_i^{(t+1)}) - \tau \cH(\pi_i^{(t)})} + \Phi^{(t+1)} - \Phi^{(t)} - \sum_{i=1}^{N} \brk{\Phi(\pi_i^{(t+1)}, \pi_{-i}^{(t)}) - \Phi^{(t)}}.
\end{align*}
The first term is lower bounded by $(1/\eta - \tau) J(\pi_i^{(t+1)}, \pi_i^{(t)})$ as shown in \eqref{eq:stationary}. For the remaining terms, we have
\begin{align}
    &\Big|\Phi^{(t+1)} - \Phi^{(t)} - \sum_{i=1}^{N} \brk{\Phi(\pi_i^{(t+1)}, \pi_{-i}^{(t)}) - \Phi^{(t)}}\Big|\notag\\
    &\le \sum_{\bm{a} \in \cA^N} \Phi(\bm{a}) \pi^{(t)}(\bm{a})\Bigg|\frac{\pi^{(t+1)}(\bm{a})}{\pi^{(t)}(\bm{a})} - 1 - \sum_{i=1}^N\prn{\frac{\pi_i^{(t+1)}(a_i)}{\pi_i^{(t)}(a_i)} - 1}\Bigg|.
    \label{eq:non_stationary}
\end{align}
To continue, we need the following elementary lemma, which will be proved at the end.
\begin{lemma}
    \label{lem:log_magic}
    For all $x \in (-1, \infty)$, it holds that
    \[
        0 \le x - \log(1+x) \le x \log(1+x).
    \]
\end{lemma}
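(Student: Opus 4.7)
The plan is to reduce both inequalities to the standard technique of checking that an auxiliary one-variable function is non-negative by locating its minimum via the first derivative. For the left inequality, define $f(x) = x - \log(1+x)$ on $(-1,\infty)$. A direct computation gives $f'(x) = 1 - 1/(1+x) = x/(1+x)$, which shares the sign of $x$ on the domain. Hence $f$ is strictly decreasing on $(-1,0)$ and strictly increasing on $(0,\infty)$, so it attains a global minimum at $x=0$ with value $f(0)=0$. This yields $x - \log(1+x) \ge 0$ for all $x > -1$.

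For the right inequality, rewrite the target as $g(x) \ge 0$, where
\[
g(x) := x\log(1+x) - \bigl(x - \log(1+x)\bigr) = (x+1)\log(1+x) - x.
\]
Differentiating gives $g'(x) = \log(1+x) + 1 - 1 = \log(1+x)$, which again has the same sign as $x$ on $(-1,\infty)$. Thus $g$ is strictly decreasing on $(-1,0)$, strictly increasing on $(0,\infty)$, and attains its minimum at $x=0$ with $g(0) = 0$, establishing $x - \log(1+x) \le x \log(1+x)$ on the whole domain. There is no real obstacle here; the only subtlety is to choose the auxiliary function $g$ so that $g'$ simplifies cleanly (the cancellation in $g'$ is what makes the argument effortless), after which monotonicity around $0$ closes both bounds uniformly on $(-1,\infty)$.
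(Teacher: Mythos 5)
Your proof is correct and follows essentially the same route as the paper: the left inequality via the tangent-line/monotonicity property of $\log(1+x)$ at $x=0$, and the right inequality by showing the difference function has derivative $\log(1+x)$, hence a global minimum of $0$ at $x=0$. No issues.
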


Invoking Lemma \ref{lem:log_magic} to obtain
\begin{align*}
    &\Bigg|\frac{\pi^{(t+1)}(\bm{a})}{\pi^{(t)}(\bm{a})} - 1 - \sum_{i=1}^N\prn{\frac{\pi_i^{(t+1)}(a_i)}{\pi_i^{(t)}(a_i)} - 1}\Bigg|\\
    &= \Bigg|\frac{\pi^{(t+1)}(\bm{a})}{\pi^{(t)}(\bm{a})} - 1 - \log \frac{\pi^{(t+1)}(\bm{a})}{\pi^{(t)}(\bm{a})} - \sum_{i=1}^N\prn{\frac{\pi_i^{(t+1)}(a_i)}{\pi_i^{(t)}(a_i)} - 1 - \log\frac{\pi_i^{(t+1)}(a_i)}{\pi_i^{(t)}(a_i)}}\Bigg|\\
    &\le \prn{\frac{\pi^{(t+1)}(\bm{a})}{\pi^{(t)}(\bm{a})} - 1}\log \frac{\pi^{(t+1)}(\bm{a})}{\pi^{(t)}(\bm{a})} + \sum_{i=1}^N\prn{\frac{\pi_i^{(t+1)}(a_i)}{\pi_i^{(t)}(a_i)} - 1} \log \frac{\pi_i^{(t+1)}(a_i)}{\pi_i^{(t)}(a_i)}.
\end{align*}
Plugging the above inequality into \eqref{eq:non_stationary} yields
\begin{align*}
    &\Big|\Phi^{(t+1)} - \Phi^{(t)} - \sum_{i=1}^{N} \brk{\Phi(\pi_i^{(t+1)}, \pi_{-i}^{(t)}) - \Phi^{(t)}}\Big|\notag\\
    &\le \Phi_{\rm max} \sum_{\bm{a} \in \cA^N} \pi^{(t)}(\bm{a}) \brk{\prn{\frac{\pi^{(t+1)}(\bm{a})}{\pi^{(t)}(\bm{a})} - 1}\log \frac{\pi^{(t+1)}(\bm{a})}{\pi^{(t)}(\bm{a})} + \sum_{i=1}^N\prn{\frac{\pi_i^{(t+1)}(a_i)}{\pi_i^{(t)}(a_i)} - 1} \log \frac{\pi_i^{(t+1)}(a_i)}{\pi_i^{(t)}(a_i)}}\\
    &= \Phi_{\rm max} \sum_{\bm{a} \in \cA^N} \brk{\prn{\pi^{(t+1)}(\bm{a}) - \pi^{(t)}(\bm{a})}\log \frac{\pi^{(t+1)}(\bm{a})}{\pi^{(t)}(\bm{a})} } + \Phi_{\rm max} \sum_{i=1}^N \sum_{a_i \in \cA}\prn{\pi_i^{(t+1)}(a_i) - \pi_i^{(t)}(a_i)} \log \frac{\pi_i^{(t+1)}(a_i)}{\pi_i^{(t)}(a_i)}\\
    &= \Phi_{\rm max} \prn{J(\pi^{(t+1)}, \pi^{(t)}) + \sum_{i=1}^N J(\pi_i^{(t+1)}, \pi_i^{(t)})} = 2 \Phi_{\rm max} J(\pi^{(t+1)}, \pi^{(t)}).
    \label{eq:non_stationary}
\end{align*}
Combining all pieces together, we have
\begin{align*}
    &\Phi_\tau^{(t+1)} - \Phi_\tau^{(t)} \ge \prn{\frac{1}{\eta} - 2\Phi_{\rm max} - \tau}\sum_{i=1}^N J(\pi^{(t+1)}, \pi^{(t)}).
\end{align*}

\vspace*{1ex}
\begin{proof}[Proof of Lemma~\ref{lem:log_magic}]
    We have $x - \log(1+x) = x \log(1+x) = 0$ and $(x - \log(1+x))' = (x \log(1+x))' = 0$ when $x = 0$. It follows that $x - \log(1+x) \ge 0$ since $\log$ is concave. With straightforward calculation, we get
    \begin{align*}
        (x \log(1+x))' - (x - \log(1+x))'
        &= \log (1+x) 
        \begin{cases}
            \ge 0 & x \ge 0\\
            < 0 & -1 < x < 0
        \end{cases},
    \end{align*}
    which implies $x - \log(1+x) \le x \log(1+x)$.
\end{proof}

\subsection{Proof of Corollary~\ref{corollary:rate_QRE}}
\label{proof:rate_QRE}

By noting that $\pi_i^{\star(0)}\propto \exp\left(r_{i}^{(0)}/\tau\right)$, and with uniform policy initialization  $\pi_i^{(0)}\propto 1$, we can conclude
$$\norm{\log \pi_i^{(0)} - \log \pi_i^{\star(0)}}_\infty\le 2\norm{\frac{r_i^{(t)}}{\tau}-0}_{\infty}\leq \frac{2}{\tau}.$$
where the first inequality follows from Lemma \ref{lem:log_pi_gap}, and the second inequality is true since the payoff is bounded by $1$.
On the other hand, we have
\begin{align*}
  \Phi_\tau^{(T)} - \Phi_\tau^{(0)} &=\Phi (\pi^{(T)})  - \Phi(\pi^{(0)})  + \tau \mathcal{H}(\pi^{(T)}) - \tau \mathcal{H}(\pi^{(0)})\\
    &\le \Phi(\pi^{(T)})  - \Phi(\pi^{(0)})  \le \Phi_{\max},
\end{align*}  
where the first inequality uses the fact that the entropy is maximized for uniform policies, and the second inequality uses $0\leq \Phi(\pi)\leq \Phi_{\max}$ for any $\pi$. Combining the above two bounds with Theorem~\ref{thm:QRE_convergence}, we have
\begin{align*}
    \frac{1}{T}\sum_{t=1}^{T} \texttt{QRE-gap}_{\tau}^{(t)}  \leq \frac{4}{\tau\eta T} + \frac{2}{\tau} \sqrt{\frac{2\Phi_{\max}}{\eta T}}.
\end{align*}
 Setting $\eta = \frac{1}{2(\min\{\sqrt{N}, 2\Phi_{\rm max}\}+\tau)}$ and $T = \mathcal{\mathcal{O}}\prn{\frac{\min\{\sqrt{N}, \Phi_{\rm max}\}\Phi_{\max}}{\tau^2\epsilon^2}}$ thus completes the proof.

\end{document}